\author{Olga V. Chuvashova}
\address{
Department of Higher Algebra\\
Faculty of Mechanics and Mathematics\\
Moscow State University\\
119992 Moscow, Russia}
\email{chuvashova@gmail.com}
\author{Nikolay A. Pechenkin}
\address{
Algebraic Geometry Section\\
Steklov Mathematical Institute\\
Russian Academy of Sciences\\
Gubkin str. 8, GSP-1\\
119991 Moscow, Russia}
\email{kolia.pechnik@gmail.com}
\thanks{The second author was partially supported by the grants NSh-5139.2012.1, MK-6612.2012.1, RFFI 12-01-31506, RFFI 12-01-33024.}
\title{Quotients of an affine variety by~an~action~of~a~torus}
\keywords{Torus action, toric variety, toric Chow quotient, toric Hilbert scheme}
\subjclass[2010]{14L24, 14C05, 14M25}
\newcommand{\Hom}{{\rm Hom}}
\newcommand{\Sp}{{\rm Spec\ }}
\newcommand{\Proj}{{\rm Proj\ }}
\newcommand{\G}{k^{\times}}
\newcommand{\Oo}{\mathcal{O}}
\newcommand{\C}{\mathcal{C}}
\renewcommand{\H}{\mathcal{H}}
\newcommand{\cQ}{\mathcal{Q}}
\newcommand{\Q}{\mathbb{Q}}
\newcommand{\A}{\mathbb{A}}
\newcommand{\X}{X}
\newcommand{\T}{\mathbb{T}}
\renewcommand{\P}{\mathbb{P}}
\newcommand{\Z}{\mathbb{Z}}
\newcommand{\N}{\mathbb{N}}
\newcommand{\cone}{{\rm cone}}
\newcommand{\conv}{{\rm conv}}
\newcommand{\norm}{{\rm norm}}
\newcommand{\ri}{{\rm relint}}
\newcommand{\supp}{{\rm supp}}
\newcommand{\Chi}{\mathfrak{X}}
\newtheorem{theorem}{Theorem}[section]
\newtheorem{corollary}[theorem]{Corollary}
\newtheorem{proposition}[theorem]{Proposition}
\newtheorem{lemma}[theorem]{Lemma}
\theoremstyle{definition}
\newtheorem{definition}[theorem]{Definition}
\newtheorem{example}[theorem]{Example}
\theoremstyle{remark}
\newtheorem{remark}[theorem]{Remark}
\begin{document}

\begin{abstract}
Let $X$ be an affine $T$-variety. We study two different quotients for the action of $T$ on $X$: the toric Chow quotient $X/_CT$ and the toric Hilbert scheme $H$. We introduce a notion of the main component $H_0$ of $H$, which parameterizes general $T$-orbit closures in $X$ and their flat limits. The main component $U_0$ of the universal family $U$ over $H$ is a preimage of $H_0$. We define an analogue of a universal family $W_X$ over the main component of the $X/_CT$. We show that the toric Chow morphism restricted on the main components lifts to a birational projective morphism from $U_0$ to $W_X$. The variety $W_X$ also provides a geometric realization of the Altmann-Hausen family. In particular, the notion of $W_X$ allows us to provide an explicit description of the fan of the Altmann-Hausen family in the toric case.
\end{abstract}

\maketitle

\section{Introduction}\label{intro}
An important problem in algebraic geometry is to introduce a good
notion of a quotient for an action of a reductive algebraic group $G$
on a variety $\X$. For many actions there exists an open subset
$U\subset \X$ where $G$ acts freely, such that a variety $U/G$
exists as a geometric quotient. Constructing a quotient $\X/G$ is,
thus, choosing a compactification of~$U/G$.

In the case when $\X$ is projective, one approach to this problem
is provided by geometric invariant theory (GIT) developed by
Mumford \cite{MF}. Given a $G$-equivariant embedding of $\X$ in
the projectivisation of a $G$-module, the GIT-quotient is the
projective spectrum of the subring of $G$-invariants in the
homogeneous coordinate ring on $\X$. There are two other natural compactifications, provided by
appropriate Chow varieties of algebraic cycles and Hilbert
schemes. The Chow quotient of a projective variety $\X$
parameterizes the closures of $G$-orbits in $\X$ having the same
dimension and degree and their limits in the Chow variety of all
algebraic cycles having these parameters. The Chow quotient of a
toric projective variety by a subtorus action was studied
in \cite{KSZ}. The invariant Hilbert scheme classifies closed $G$-invariant subschemes
$Z\subset X$ such that the $G$-module $\Oo(Z)$ has prescribed multiplicities (see \cite{B}). The $G$-Hilbert scheme is a particular case of the invariant Hilbert scheme. It arises in the case of a finite group $G$ and is considered in \cite{Be}. The main component of the $G$-Hilbert scheme parameterizes regular $G$-orbits in $X$ and their flat limits. Another particular case of an invariant Hilbert scheme is the toric Hilbert scheme which will be considered below.

\bigskip

We are interested in the following case. Let $X$ be an affine variety and $G=T$ be an algebraic torus. Denote by $X^s$ the subset of stable points in $X$ under the torus action which is an open subset in $X$ where $T$ acts freely (see Section \ref{mc} for the precise definition). The toric Hilbert scheme $H$ is defined as the invariant Hilbert scheme parameterizing $T$-invariant ideals in $k[X]$ having the same Hilbert function as the toric variety $\overline{Tx}$, where $x\in X^s$. The main component $H_0$ of $H$ is the irreducible component obtained by closing the image of $X^s/T$ in $H$. In the case of the subtorus action on the affine toric variety $X$ the toric Hilbert scheme and its main component were studied in \cite{C}.

The Chow quotient $(X/_C T)_0$ of $X$ by the $T$-action is by definition the main component of the inverse limit $X/_C T$ of $GIT$-quotients (see Section~\ref{git1}). It obtained by closing the image of $X^s/T$ in $X/_C T$, so there is a canonical rational map $q: X\dashrightarrow (\X/_C T)_0$. Denote by $Y:=(X/_C T)_0^{\text{norm}}$ the normalization of $(X/_C T)_0$. In \cite{AH} Altmann and Hausen introduced a certain family $\psi : \widetilde{X} \rightarrow Y$ of $T$-varieties, where the $T$-variety $\widetilde{X}$ can be interpreted as a resolution of $X$ improving the quotient behavior. The morphism $\psi$ is a good quotient under the action of $T$ and there is a proper and birational morphism $\varphi: \widetilde{X} \rightarrow X$  such that the following diagram is commutative
$$
\xymatrix{
\widetilde{X} \ar[r]^{\varphi} \ar[d]_{\psi} & X \ar@{-->}[ld]^{q^{\text{norm}}}\\
 Y.
}
$$
The family $\psi$ is closely related to the combinatorial-geometrical datum used in~\cite{AH} for the description of affine normal $T$-varieties. In Section~\ref{cah} we recall the construction of $\psi$ and explain how one can easily pass from $\psi$ to this datum and back.
A first result of the paper, Proposition~\ref{main},
realizes $\psi$ as the normalization of an elementarily constructed family
$p_C: W_X \rightarrow (\X/_CT)_0$. The variety $W_X$ is defined as the closure of the graph of the rational map $q$, and $p_C$ is the projection on~$(\X/_C T)_0$. In Theorem~\ref{veer} we then use this result to construct the fan of variety $\widetilde{X}$ in the toric case.

Our main result, Theorem~\ref{remw}, relates the universal
family $U_0 \rightarrow H_0$ over the main component of the toric Hilbert
scheme via a toric Chow morphism to the family $W_X \rightarrow (\X/_C T)_0$. The toric Chow morphism from $H$ to $X/_C T$ was constructed by Haiman and Sturmfels in \cite{HS} in the case when $X$ is a finite-dimensional $T$-module. We generalize their construction and include this morphism into a commutative square
\begin{equation}\label{cd}
\xymatrix{
         U_0 \ar[r]\ar[d]   & W_\X \ar[d]^{p_C} \\
         H_0 \ar[r]  & (\X/_C T)_0.
}
\end{equation}
We state that horizontal morphisms here are projective and birational.

\bigskip

This paper is organized as follows. In Section~\ref{sec2} we fix notations concerning toric geometry and $T$-varieties and recall the definition of the functor of points which is necessary to define the invariant Hilbert scheme. In Section~\ref{sec3-1} we provide an exposition of the results of \cite{AH}. We recall the notion of the Altmann-Hausen family and then prove our first result, Proposition~\ref{main}, about its simple geometrical realization. In Section~\ref{sec3-2} we recall the definition of the toric Hilbert scheme and introduce the notion of its main component in the $T$-variety case. In Section~\ref{sec3-3} we investigate the toric Chow morphism, lift it to the universal families and prove our main result, Theorem~\ref{remw}. Section~\ref{sec4} is, in fact, repetition of the Section~\ref{sec3} in the case of a subtorus action on an affine toric variety. In this case almost all the statements of Section~\ref{sec3} can be interpreted in terms of fans. We provide the construction of the fans of the toric varieties $H_0$, $(X/_C T)_0$, $U_0$ and $W_X$ and compute them in concrete examples.

\section{Preliminaries}\label{sec2}

We consider the category of schemes over an algebraically closed
field $k$ of characteristic zero. Our main references on schemes are \cite{EH} and \cite{Har}.

A {\it variety} is a separated reduced scheme of finite
type. We denote by $\Oo_Z$ the structure sheaf of a scheme $Z$, and
$k[Z]$ denotes the algebra of sections of $\Oo_Z$ over $Z$.

\subsection{Basic facts from toric geometry}

An $n$-dimensional {\it torus} $T$ is an algebraic group isomorphic to the direct product of $n$ copies of the multiplicative group $\G$.
We use the notations $\Chi(T) = \Hom(T, \G)$ and $\Lambda(T)= \Hom(\G, T)$ for the lattices of characters and one-parameter subgroups of $T$ respectively. Denote by $\Chi(T)_{\Q}:=\Chi(T)\otimes_{\Z}\Q$ and $\Lambda(T)_{\Q}:=\Lambda(T)\otimes_{\Z}\Q$ the corresponding $\Q$-vector spaces.

For any affine scheme $X$ with an action of a
torus $T$  its algebra of regular functions $k[\X]$ is graded by the group $\Chi(T)$ of characters of
$T$ : $$k[X]=\bigoplus_{\chi\in\, \Chi(T)} k[X]_{\chi},$$
 where  $k[X]_{\chi}:=\{f\in k[X]: t\cdot f=\chi(t)f \,\,\,\, \forall \, t\in T \}$ is the
subspace of $T$-semi-invariant functions of weight $\chi$. Let
$$\Sigma:=\{\chi\in \Chi(T)\ : \ k[X]_\chi\ne 0\}.$$
If $X$ is an irreducible variety, then $\Sigma$ is a finitely generated monoid called the \emph{weight monoid}. If $T$ acts on $X$ faithfully, then $\Sigma$ generates $\Chi(T)$.

A $T$-\emph{variety} is a normal variety endowed with a faithful regular action of $T$. Let $X$ be a $T$-variety. A morphism $\pi: X \to Y$ is called a \emph{good quotient} for this action if $\pi$ is affine, $T$-invariant, and the canonical map $\pi^{\sharp}: \mathcal{O}_Y \rightarrow \pi_{\ast}(\mathcal{O}_X)^T$ is an isomorphism.

Given a scheme $S$, a {\it family of affine $T$-schemes} over $S$
is a scheme $X$ equipped with an action of $T$ and with a morphism
$p:X\to S$ such that $p$ is affine, of finite type and
$T$-invariant. Then  the sheaf of $\Oo_S$-algebras $p_*(\Oo_X)$ is
equipped with a compatible grading by $\Chi(T)$.

A {\it toric variety} under the torus $T$ is an irreducible $T$-variety $X$ that contains an open orbit isomorphic to $T$. We do not require $X$ to be normal. We will consider only those toric varieties $X$ that admit an open covering by affine $T$-invariant charts (all normal toric varieties satisfy this condition). Our main references on toric varieties are \cite{CLS}, \cite{Ful} and \cite{Oda}.

 Given a toric variety $X$, we denote by $\C_X\subset \Lambda(T)_{\Q}$ the associated fan. The $T$-orbits on $X$ are in order-reversing one-to-one correspondence with the cones of $\C_X$. If $\sigma(Z)$ is the cone in $\C_X$ corresponding to a $T$-orbit $Z$, then a one-parameter subgroup $\lambda\in \Lambda(T)$ lies in the interior of $\sigma(Z)$ if and only if $\lim_{s\to 0}\lambda (s)$ exists and lies in $Z$. A toric variety is determined by its fan up to normalization.

Let $X_1$ under a torus $T_1$ and $X_2$ under a torus~$T_2$ be toric varieties. A morphism $\varphi: X_1\rightarrow X_2$ is \emph{toric} if $\varphi$ maps the torus $T_1$ into $T_2$ and $\varphi |_{T_1}: T_1\rightarrow T_2$ is a group homomorphism. For the toric morphism $\varphi$ we have the following commutative diagram:
$$
\xymatrix{
T_1\times X_1 \ar[d]^{\varphi |_{T_1}\times\varphi}\ar[r] & X_1 \ar[d]^{\varphi} \\
T_2\times X_2 \ar[r] & X_2 .
}
$$

 The morphism of algebraic groups $\varphi: T_1\rightarrow T_2$ induces a $\Z$-linear map $\overline{\varphi}: \Lambda(T_1)\rightarrow \Lambda(T_2)$ and $\Q$-linear map $\overline{\varphi}_{\Q}: \Lambda(T_1)_{\Q}\rightarrow \Lambda(T_2)_{\Q}$.

In the following two propositions $X_1$ and $X_2$ are normal toric varieties.
\begin{proposition}\cite[Theorem~3.3.4]{CLS}\label{tormorph}
 A morphism $\varphi: X_1\rightarrow X_2$ is toric if and only if the corresponding $\Z$-linear map $\overline{\varphi}$ is compatible with fans $\mathcal{C}_{X_1}$ and $\mathcal{C}_{X_2}$. It means that for every cone $\varsigma\in \mathcal{C}_{X_1}$ there exists a cone $\widetilde{\varsigma}\in \mathcal{C}_{X_2}$ such that $\overline{\varphi}_{\Q}(\varsigma)\subset \widetilde{\varsigma}$.
\end{proposition}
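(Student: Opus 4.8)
The plan is to establish the two implications separately, using the orbit--cone correspondence and the limit criterion recalled above, together with the standard fact that for a cone $\varsigma$ of a fan the limit $\lim_{s\to 0}\lambda(s)$ of a one-parameter subgroup $\lambda\in\ri(\varsigma)$ is one and the same point --- call it $\gamma_{\varsigma}$ --- for every such $\lambda$, and that it lies in the $T$-orbit attached to $\varsigma$.

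First I would handle the ``only if'' direction. Suppose $\varphi$ is toric; then $\overline{\varphi}$ is the lattice map induced by the group homomorphism $\varphi|_{T_1}\colon T_1\to T_2$, and $\varphi(\mu(s))=\bigl(\overline{\varphi}(\mu)\bigr)(s)$ for every one-parameter subgroup $\mu$ of $T_1$ and every $s\in\G$. Fix a nonzero cone $\varsigma\in\C_{X_1}$ (the zero cone being trivial) and a lattice point $\lambda_0\in\ri(\varsigma)$. Applying the continuous map $\varphi$ to $\lim_{s\to 0}\lambda_0(s)=\gamma_{\varsigma}$ gives $\lim_{s\to 0}\bigl(\overline{\varphi}(\lambda_0)\bigr)(s)=\varphi(\gamma_{\varsigma})$, so this limit exists in $X_2$; let $\widetilde{\varsigma}\in\C_{X_2}$ be the cone whose orbit contains $\varphi(\gamma_{\varsigma})$, so that $\overline{\varphi}(\lambda_0)\in\ri(\widetilde{\varsigma})$ by the limit criterion. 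The one delicate point is that a priori a different lattice point of $\varsigma$ could be sent into the relative interior of a different cone of $\C_{X_2}$; this must be ruled out before anything else. It is ruled out because for every integer $m\ge 1$ and every $\lambda\in\varsigma\cap\Lambda(T_1)$ the subgroup $\lambda_0+m\lambda$ again lies in $\ri(\varsigma)$, so the identical computation yields $\lim_{s\to 0}\bigl(\overline{\varphi}(\lambda_0+m\lambda)\bigr)(s)=\varphi(\gamma_{\varsigma})$ --- the very same point, lying in a single orbit --- whence $\overline{\varphi}(\lambda_0+m\lambda)\in\ri(\widetilde{\varsigma})\subset\widetilde{\varsigma}$ by injectivity of the orbit--cone correspondence. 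Scaling by $1/m$ and letting $m\to\infty$ then forces $\overline{\varphi}(\lambda)\in\widetilde{\varsigma}$ since $\widetilde{\varsigma}$ is closed; as $\varsigma$ is rational its lattice points generate it over $\Q_{\ge 0}$, and therefore $\overline{\varphi}_{\Q}(\varsigma)\subset\widetilde{\varsigma}$.

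For the ``if'' direction, assume the compatibility condition and, for each $\varsigma\in\C_{X_1}$, fix $\widetilde{\varsigma}\in\C_{X_2}$ with $\overline{\varphi}_{\Q}(\varsigma)\subset\widetilde{\varsigma}$. The transpose of $\overline{\varphi}$ then carries the semigroup $\widetilde{\varsigma}^{\vee}\cap\Chi(T_2)$ into $\varsigma^{\vee}\cap\Chi(T_1)$, hence induces a homomorphism of semigroup algebras and thus an affine morphism $\varphi_{\varsigma}$ from the affine toric chart $U_{\varsigma}\subset X_1$ attached to $\varsigma$ into $X_2$ (through the chart of $\widetilde{\varsigma}$), whose restriction to the big torus is the homomorphism $T_1\to T_2$ dual to that transpose and is, in particular, independent of $\varsigma$. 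Since $X_2$ is separated and $T_1$ is dense in every chart, any two of the morphisms $\varphi_{\varsigma}$ agree on the overlap $U_{\varsigma}\cap U_{\varsigma'}=U_{\varsigma\cap\varsigma'}$, so they glue to a single morphism $\varphi\colon X_1\to X_2$ which is toric and induces the prescribed $\overline{\varphi}$. The remaining steps --- identifying the transpose semigroup map, checking each $\varphi_{\varsigma}$ is toric, and confirming that the glued morphism realizes $\overline{\varphi}$ --- are routine bookkeeping; as indicated, the only place that needs genuine care is excluding the ``different cone'' possibility in the first implication, which the fixed point $\gamma_{\varsigma}$ takes care of.
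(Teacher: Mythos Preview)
The paper does not supply a proof of this proposition; it is cited from \cite[Theorem~3.3.4]{CLS} as background and stated without argument. Your proof is correct and follows the standard textbook route---the limit criterion together with the perturbation $\lambda_0+m\lambda$ pins down a single target cone in the forward direction, and the dual-semigroup chart maps glued over a separated target give the converse---so there is nothing in the paper itself to compare it against.
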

Denote by $|\mathcal{C}|$ the support of the fan $\mathcal{C}$.
\begin{proposition}\cite[Theorem~3.4.11]{CLS}\label{prop}
A toric morphism $\varphi: X_1\rightarrow X_2$ is proper if and only if $\overline{\varphi}_{\Q}^{-1}(|\mathcal{C}_{X_2}|)=|\mathcal{C}_{X_1}|$.
\end{proposition}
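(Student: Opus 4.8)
The plan is to reduce properness to the valuative criterion and then to read everything off from the description, recalled above, of $|\mathcal{C}_{X_i}|$ as the set of one-parameter subgroups admitting a limit; Proposition~\ref{tormorph} handles the trivial half. First note that $|\mathcal{C}_{X_1}|\subseteq\overline{\varphi}_{\Q}^{-1}(|\mathcal{C}_{X_2}|)$ for \emph{any} toric morphism: by Proposition~\ref{tormorph} every $\varsigma\in\mathcal{C}_{X_1}$ satisfies $\overline{\varphi}_{\Q}(\varsigma)\subseteq\widetilde{\varsigma}\subseteq|\mathcal{C}_{X_2}|$ for a suitable $\widetilde{\varsigma}\in\mathcal{C}_{X_2}$, and one unions over $\varsigma$. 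So the proposition is equivalent to the assertion that $\varphi$ is proper if and only if $\overline{\varphi}_{\Q}^{-1}(|\mathcal{C}_{X_2}|)\subseteq|\mathcal{C}_{X_1}|$. Since $\varphi$ is a finite-type morphism of separated Noetherian schemes, properness is equivalent to the valuative criterion, and uniqueness of the lift is automatic from separatedness; hence only the \emph{existence} of a lift $\widetilde{\alpha}\colon\Sp R\to X_1$ in a commutative square with $\alpha\colon\Sp K\to X_1$ and $\beta\colon\Sp R\to X_2$ ($R$ a discrete valuation ring, $K=\mathrm{Frac}\,R$) has to be addressed.

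For "$\varphi$ proper $\Rightarrow$ inclusion", take $v\in\overline{\varphi}_{\Q}^{-1}(|\mathcal{C}_{X_2}|)$ and, after rescaling, assume $v=\lambda\in\Lambda(T_1)$. Then $\overline{\varphi}(\lambda)\in|\mathcal{C}_{X_2}|$, so $\lim_{s\to0}\overline{\varphi}(\lambda)(s)$ exists in $X_2$, and on the affine chart of $X_2$ containing this limit the one-parameter subgroup $\overline{\varphi}(\lambda)\colon\G\to T_2\hookrightarrow X_2$ extends to a morphism $\A^1\to X_2$. Taking $R=\Oo_{\A^1,0}$ and $K=k(s)$, the one-parameter subgroup $\lambda\colon\G\to T_1\hookrightarrow X_1$ defines $\alpha$, the extension above defines $\beta$, and they agree over $\Sp K$ since both restrict to $s\mapsto\overline{\varphi}(\lambda)(s)$. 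Properness produces $\widetilde{\alpha}\colon\Sp R\to X_1$ sending the closed point to $\lim_{s\to0}\lambda(s)$; thus this limit exists in $X_1$ and the limit criterion gives $\lambda\in|\mathcal{C}_{X_1}|$. Hence $\overline{\varphi}_{\Q}^{-1}(|\mathcal{C}_{X_2}|)\subseteq|\mathcal{C}_{X_1}|$.

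For "inclusion $\Rightarrow$ $\varphi$ proper", assume $\overline{\varphi}_{\Q}^{-1}(|\mathcal{C}_{X_2}|)\subseteq|\mathcal{C}_{X_1}|$ and verify the valuative criterion. Properness is local on the target, so we may replace $X_2$ by an affine chart $U_{\widetilde{\sigma}}=\Sp k[\widetilde{\sigma}^{\vee}\cap\Chi(T_2)]$ and $X_1$ by $\varphi^{-1}(U_{\widetilde{\sigma}})$, which is the toric variety, with the same torus $T_1$, whose fan is the subfan $\{\tau\in\mathcal{C}_{X_1}:\overline{\varphi}_{\Q}(\tau)\subseteq\widetilde{\sigma}\}$ of $\mathcal{C}_{X_1}$; a short argument with relative interiors shows the support of this subfan still equals $\overline{\varphi}_{\Q}^{-1}(\widetilde{\sigma})$, so the hypothesis survives the reduction. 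Enlarging $R$ if needed we take $R=k[[t]]$, $K=k((t))$, and given $\alpha,\beta$ we may --- replacing $X_1$ and $X_2$ by the closures of the orbits through the images of $\alpha$ and $\varphi\circ\alpha$, again normal toric varieties --- assume $\alpha$ lands in the open torus $T_1$. Then $\alpha$ has a valuation vector $v\in\Lambda(T_1)$, sending a character $m$ to the $t$-adic order of $\alpha^{\sharp}(x^{m})$, and compatibility of $\beta$ with the ring map $k[\widetilde{\sigma}^{\vee}\cap\Chi(T_2)]\to R$ amounts to $\langle m,\overline{\varphi}(v)\rangle\ge0$ for all $m\in\widetilde{\sigma}^{\vee}$, i.e. $\overline{\varphi}(v)\in\widetilde{\sigma}$, i.e. $v\in\overline{\varphi}_{\Q}^{-1}(\widetilde{\sigma})=|\mathcal{C}_{X_1}|$. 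Choosing $\varsigma\in\mathcal{C}_{X_1}$ with $v\in\varsigma$, every character of $\varsigma^{\vee}$ has nonnegative $t$-adic order along $\alpha$, so $\alpha^{\sharp}$ carries $k[\varsigma^{\vee}\cap\Chi(T_1)]$ into $R$ and $\alpha$ extends to $\widetilde{\alpha}\colon\Sp R\to U_{\varsigma}\subseteq X_1$, which is the required lift. Therefore $\varphi$ is proper.

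The step I expect to cost the most care is the combinatorial bookkeeping in the last paragraph: checking that the support condition is preserved when one restricts to an affine chart of $X_2$, and that the hypothesis descends to the orbit closures used to push the $K$-point into the open torus --- both rest on the interplay between a cone, its relative interior, and its image under $\overline{\varphi}_{\Q}$ (a point in the relative interior of a cone cannot lie in a proper face of it). Once these are settled, the rest is the routine valuation-vector computation above, and the special case $X_2=\Sp k$ recovers the familiar statement that a normal toric variety is complete exactly when $|\mathcal{C}_{X_1}|=\Lambda(T_1)_{\Q}$.
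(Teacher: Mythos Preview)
The paper does not supply a proof of this proposition: it is quoted without argument from \cite[Theorem~3.4.11]{CLS} and used only as input to later results (Theorem~\ref{veer} and its alternative proof). There is therefore no ``paper's own proof'' to compare against.

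Your argument is essentially the standard one given in \cite{CLS}: reduce to the valuative criterion, translate existence of lifts into the membership of the valuation vector in the fan support, and handle the easy direction via one-parameter subgroups. The two points you flag as needing care --- that the support hypothesis survives restriction to an affine chart $U_{\widetilde{\sigma}}$ of $X_2$, and that it survives passage to orbit closures so that $\alpha$ may be assumed to land in the open torus --- are exactly the places where a careful write-up must invoke the convexity fact that if $v\in\ri(\tau)$ and $\overline{\varphi}_{\Q}(\tau)$ lies in a cone of $\mathcal{C}_{X_2}$, then $\overline{\varphi}_{\Q}(\tau)$ already lies in the minimal cone containing $\overline{\varphi}_{\Q}(v)$. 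With that in hand your outline is correct; note only that after replacing $X_1$ by an orbit closure the big torus becomes a quotient of $T_1$, so the symbol ``$T_1$'' in the last paragraph should be read accordingly.
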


If $X$ is toric variety under a factor torus $\T/T$, then we can consider the fan of this variety as a quasifan in $\Lambda(\T)_{\Q}$ whose cones include the subspace $\Lambda(T)_{\Q}\subset \Lambda(\T)_{\Q}$.

\begin{proposition}\label{per}
Let $X=\overline{Tx}$, $Y=\overline{Ty}$ be toric varieties. Then the fan associated to the toric variety $\overline{T(x,y)}\subset X\times Y$ is the coarsest common refinement of fans $\mathcal{C}_X$ and $\mathcal{C}_Y$ (in particular, its support is equal to $|\mathcal{C}_X|\cap|\mathcal{C}_Y|$).
\end{proposition}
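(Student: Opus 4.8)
The plan is to identify $V:=\overline{T(x,y)}$ with the toric variety whose weight monoid, on each affine chart, is a Minkowski sum of weight monoids, and then read off its fan. First note that $V$ is a toric variety under $T$: the first projection carries the orbit $T\cdot(x,y)$ isomorphically onto $Tx\cong T$, so $T\cdot(x,y)\cong T$, and it is open and dense in $V$. Moreover the two projections $X\times Y\to X$ and $X\times Y\to Y$ restrict to toric morphisms $V\to X$ and $V\to Y$ inducing the identity on $\Lambda(T)$. Passing to normalizations (a toric variety and its normalization have the same fan) and applying Proposition~\ref{tormorph}, we see that every cone of $\mathcal{C}_V$ lies in a cone of $\mathcal{C}_X$ and in a cone of $\mathcal{C}_Y$, hence in their intersection; thus $\mathcal{C}_V$ refines the coarsest common refinement $\mathcal{R}$ of $\mathcal{C}_X$ and $\mathcal{C}_Y$, and in particular $|\mathcal{C}_V|\subseteq|\mathcal{C}_X|\cap|\mathcal{C}_Y|=|\mathcal{R}|$. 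It remains to prove $\mathcal{C}_V=\mathcal{R}$.

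For this I reduce to affine $T$-charts. Choosing affine $T$-invariant open covers $X=\bigcup X_i$ and $Y=\bigcup Y_j$, each $X_i$ and $Y_j$ contains the respective dense orbit (a nonempty $T$-invariant open subset of a toric variety meets the dense orbit in all of it), so $(x,y)\in X_i\times Y_j$, and $V\cap(X_i\times Y_j)$ is the closure of $T\cdot(x,y)$ inside the affine variety $X_i\times Y_j$ — again a toric orbit closure of the same kind. Its orbits are exactly the orbits of $V$ that it contains, so the fans of these charts glue to $\mathcal{C}_V$, and we may assume $X$ and $Y$ affine. Then $k[X]=\bigoplus_{\chi\in S_X}k\cdot\chi$ is the semigroup algebra of the weight monoid $S_X\subset\Chi(T)$, with one-dimensional graded pieces because $k[X]\hookrightarrow k[Tx]=k[T]$; likewise $k[Y]=k[S_Y]$. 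The restriction map $k[X\times Y]=k[X]\otimes_k k[Y]\to k[V]\subset k[T]$ carries $\chi_1\otimes\chi_2$ to the character $\chi_1+\chi_2$, so $k[V]=k[S_X+S_Y]$ is the semigroup algebra of the Minkowski sum $S_X+S_Y$.

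Consequently the normalization of $V$ has coordinate ring $k[(\Q_{\ge0}(S_X+S_Y))\cap\Chi(T)]$. Since $\Q_{\ge0}(S_X+S_Y)=\Q_{\ge0}S_X+\Q_{\ge0}S_Y=\sigma_X^{\vee}+\sigma_Y^{\vee}$, where $\sigma_X=|\mathcal{C}_X|$ and $\sigma_Y=|\mathcal{C}_Y|$ are the maximal cones of the affine fans, dualizing gives the maximal cone $(\sigma_X^{\vee}+\sigma_Y^{\vee})^{\vee}=\sigma_X\cap\sigma_Y$. Hence $\mathcal{C}_V$ consists of the faces of $\sigma_X\cap\sigma_Y$, which is precisely $\mathcal{R}$ in the affine case. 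Gluing over the charts, the cones $\sigma_X\cap\sigma_Y$ assemble into $\mathcal{R}$, so $\mathcal{C}_V=\mathcal{R}$; the parenthetical assertion is then the tautology $|\mathcal{R}|=|\mathcal{C}_X|\cap|\mathcal{C}_Y|$, which may also be seen directly, as $\lim_{s\to0}\lambda(s)(x,y)$ exists in the closed subvariety $V\subset X\times Y$ exactly when both $\lim_{s\to0}\lambda(s)x$ and $\lim_{s\to0}\lambda(s)y$ exist.

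The one point that genuinely needs care is that $X$ and $Y$ are not assumed normal, so one cannot dualize the monoids $S_X$, $S_Y$, $S_X+S_Y$ directly but must first pass to the normalization, i.e. replace $S_X+S_Y$ by its saturation before taking the dual cone; granting this and the routine chart bookkeeping indicated above, the identification of the maximal cone is immediate, and that computation is the heart of the argument.
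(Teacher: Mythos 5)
Your argument is correct in substance, but it is a genuinely different proof from the one in the paper. The paper's proof is a two-line application of the orbit--cone correspondence via limits of one-parameter subgroups, applied directly to the point $(x,y)$: the limit $\lim_{s\to 0}\lambda(s)(x,y)$ exists iff both coordinate limits exist, and two one-parameter subgroups produce the same limit point iff they do so in each factor, i.e.\ iff they lie in the same cone of $\mathcal{C}_X$ and the same cone of $\mathcal{C}_Y$. That argument needs no reduction to charts, no normalization, and no coordinate rings. Your route --- restrict to a product of affine invariant charts, identify $k[\overline{T(x,y)}]$ with the semigroup algebra of the Minkowski sum $S_X+S_Y$ of the weight monoids, saturate, and dualize $(\sigma_X^{\vee}+\sigma_Y^{\vee})^{\vee}=\sigma_X\cap\sigma_Y$ --- is longer but buys more: it describes the actual (possibly non-normal) multiplicative structure of each chart of $\overline{T(x,y)}$, not merely its fan, and it makes the first half of your write-up (the refinement inequality via Proposition~\ref{tormorph}) redundant. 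Two small points deserve attention. First, as the remark following the proposition stresses, the $T$-actions on $X$ and $Y$ need not be faithful, so your opening claim that the first projection carries $T\cdot(x,y)$ isomorphically onto $Tx\cong T$ can fail ($T(x,y)\cong T/(T_x\cap T_y)$ in general); this does not damage the monoid computation, which goes through verbatim with quasifans and cones having lineality, but the sentence should be weakened to the assertion that $\overline{T(x,y)}$ is an orbit closure with dense orbit. Second, you should record the one-line verification that the faces of $\sigma_X\cap\sigma_Y$ are exactly the intersections of a face of $\sigma_X$ with a face of $\sigma_Y$ (write a supporting functional as $u_1+u_2$ with $u_i\in\sigma_{X}^{\vee},\sigma_Y^{\vee}$ and note that nonnegative quantities summing to zero vanish separately); without it the identification of the face fan of $\sigma_X\cap\sigma_Y$ with the coarsest common refinement on each chart is asserted rather than proved.
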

Note that the action of the torus $T$ on the toric varieties $X$ and $Y$ is not required to be faithful here. So the fans $\mathcal{C}_X$ and $\mathcal{C}_Y$ should be considered as quasifans in $\Lambda(T)$.
\begin{proof}
 Let $\lambda\in \Lambda(T)$. The limit $\lim_{t\rightarrow 0} \lambda (t)(x,y)$ exists if and only if both limits $\lim_{t\rightarrow 0}\lambda (t)x$ and $\lim_{t\rightarrow 0}\lambda (t)y$ exist. The limits of the point $(x,y)$ with respect to one-parameter subgroups $\lambda_1$ and $\lambda_2$ coincide if and only if the limits with respect to $\lambda_1$ and $\lambda_2$ of both points $x$ and $y$ coincide, i.e. $\lambda_1$ and $\lambda_2$ lie in the same cones of the fans $\mathcal{C}_X$ and $\mathcal{C}_Y$.
\end{proof}

\subsection{Basic facts on the functor of points}

Recall
that any scheme $Z$ is characterized by its {\it functor of
points} that is the contravariant functor from the category of schemes to the category of sets:
$$\underline{Z}: ({\rm Sch })^{\circ} \to ({\rm Set}), \ \ \
\underline{Z}(X):={\rm Mor}(X, Z),$$ where ${\rm Mor}(X, Z)$ is the set of morphisms of schemes
from $X$ to $Z$ over $k$ (we denote the functor of points of a scheme by the corresponding underlined letter). Each $f\in {\rm Mor}(X, Y)$ defines a morphism of sets $\underline{Z}(f): \underline{Z}(Y)\to \underline{Z}(X)$. For $g\in {\rm Mor}(Y, Z)$ we have $\underline{Z}(f)(g):=g \circ f \in {\rm Mor}(X, Z)$.

Let $F: ({\rm Sch })^{\circ}\to ({\rm Set})$ be an arbitrary functor. We say that a scheme $Z$ represents the functor $F$ if there exists an isomorphism of functors $\underline{Z}\cong F$ (the scheme $Z$ is then also called the fine moduli space of the functor $F$). Denote by ${\rm (Fun((Sch)}^{\circ}{\rm,\,(Set)))}$ the category of contravariant functors from the category of schemes to the category of sets.

The covariant functor
$$\underline{\ast}: {\rm (Sch) } \to {\rm (Fun((Sch)}^{\circ}{\rm,\,(Set)))} $$
is defined by $X\mapsto \underline{X}$. By Yoneda's lemma, the functor $\underline{\ast}$ is an equivalence between the category of schemes and the full subcategory of the category of functors. In particular, it defines a natural bijection between the sets ${\rm Mor}(X, Y)$ and ${\rm Mor}(\underline{X}, \underline{Y})$.

\section{General constructions}\label{sec3}

\subsection{The toric Chow quotient and the Altmann-Hausen theory}\label{sec3-1}
\subsubsection{Combinatorial description of affine normal T-varieties} In this section we recall the description of normal affine $T$-varieties in terms of proper polyhedral divisors on a normal semiprojective variety given in \cite{AH}.

 Let $M$ be a lattice, $N$ be its dual lattice, $M_{\Q}:=M\otimes_{\Z}\Q$, $N_{\Q}:=N\otimes_{\Z}\Q$. Denote by $\langle \cdot , \cdot \rangle : M_{\Q}\times N_{\Q} \rightarrow \Q$ the natural duality pairing, by $\sigma$ a pointed polyhedral cone in $N_{\Q}$, by $\sigma^{\vee}\subset M_{\Q}$ its dual cone, and by $\text{Pol}_{\sigma}(N_{\Q})$ the set of all $\sigma$-polyhedra, i.e. polyhedra in $N_{\Q}$ with the recession cone $\sigma$ (see \cite[Definition~1.11]{Z} for the definition of the recession cone). Let $T := \text{Spec} \ k[M]$ be an algebraic torus with its lattice of characters equal to $M$.

To a $\sigma$-polyhedron $\Delta\in \text{Pol}_{\sigma}(N_{\Q})$ we
associate its support function $h_{\Delta}: \sigma^{\vee} \rightarrow \Q$, defined by $$h_{\Delta}(m)=\min \langle m, \Delta \rangle=\min_{p\in\Delta} \langle m, p \rangle.$$

\begin{definition}
 A variety is called \emph{semiprojective} if it is projective over an affine variety. Let $Y$ be a normal semiprojective variety. A $\sigma$\emph{-polyhedral divisor} on $Y$ is a formal sum $\mathfrak{D}=\sum_Z \Delta_Z \cdot Z$, where $Z$ runs over all prime divisors on $Y$, $\Delta_Z\in \text{Pol}_{\sigma}(N_{\Q})$, and $\Delta_Z=\sigma$ for all but finitely many $Z$. For $m\in\sigma^{\vee}$ we define the $\Q$-divisor $\mathfrak{D}(m):=\sum_Z h_{\Delta_Z}(m)\cdot Z$ on $Y$. A $\sigma$-polyhedral divisor D is called \emph{proper} if the following two conditions hold:
\begin{enumerate}
\item
$\mathfrak{D}(m)$ is semiample and $\Q$-Cartier for all $m\in\sigma^{\vee}$,
\item
$\mathfrak{D}(m)$ is big for all $m\in\ri(\sigma^{\vee})$.
\end{enumerate}
\end{definition}

 A $\Q$-Cartier divisor $D$ is called \emph{semiample} if there exists $r > 0$, such that the linear system $|rD|$ is base point free, and \emph{big} if there exists a divisor $D_0\in|rD|$, for some $r > 0$, such that the complement $Y\backslash \text{Supp}(D_0)$ is affine.

For a $\Q$-divisor $D=\sum_Z a_Z\cdot Z$, let $\lfloor D\rfloor := \sum_Z \lfloor a_Z \rfloor \cdot Z$ be the round-down divisor of~$D$, and $\mathcal{O}(D):=\mathcal{O}(\lfloor D\rfloor)$ be the corresponding sheaf of $\mathcal{O}_Y$-modules. Any $\sigma$-polyhedral divisor $\mathfrak{D}$ defines an $M$-graded quasicoherent sheaf of algebras on~$Y$ as follows:
$$\mathcal{A}[Y,\mathfrak{D}]:=\bigoplus_{m\in \sigma^{\vee}\cap M} \mathcal{O}(\mathfrak{D}(m)).$$
Let $A[Y,\mathfrak{D}]:=\Gamma (Y, \mathcal{A}[Y,\mathfrak{D}])$ be the $M$-graded algebra corresponding to $\mathfrak{D}$.

\begin{theorem}\label{a-h}
To any proper $\sigma$-polyhedral divisor $\mathfrak{D}$ on a normal semiprojective variety~$Y$
one can associate a normal affine $T$-variety of dimension $\dim Y+\dim T$ defined by $X[Y, \mathfrak{D}]:=\Sp A[Y,\mathfrak{D}]$.

Conversely, any normal affine $T$-variety is isomorphic to $X[Y,\mathfrak{D}]$ for some semiprojective
variety $Y$ and some proper $\sigma$-polyhedral divisor $\mathfrak{D}$ on it.
\end{theorem}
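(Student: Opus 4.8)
The plan is to prove the two assertions of the theorem separately. For the direct construction, fix a proper $\sigma$-polyhedral divisor $\mathfrak{D}$ on a normal semiprojective variety $Y$. I would first note that each support function $h_{\Delta_{Z}}$, being a minimum of $\Q$-linear forms, is concave and positively homogeneous on $\sigma^{\vee}$, so that $\mathfrak{D}(m)+\mathfrak{D}(m')\le\mathfrak{D}(m+m')$ for $m,m'\in\sigma^{\vee}\cap M$. Consequently the product, computed in $k(Y)$, of a section of $\mathcal{O}(\mathfrak{D}(m))$ with a section of $\mathcal{O}(\mathfrak{D}(m'))$ is a section of $\mathcal{O}(\mathfrak{D}(m+m'))$, so $A[Y,\mathfrak{D}]$ is an $M$-graded subalgebra of the group algebra $k(Y)[M]$; the grading defines the $T$-action on $X[Y,\mathfrak{D}]$, which is faithful since $\sigma$ is pointed and hence $\sigma^{\vee}$ spans $M_{\Q}$. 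Normality of $X[Y,\mathfrak{D}]$ descends from that of $Y$ by a valuation argument: the degree-$m$ component is exactly $\{f\in k(Y):\mathrm{div}(f)+\mathfrak{D}(m)\ge 0\}$, and any element of $k(Y)[M]$ integral over $A[Y,\mathfrak{D}]$ automatically satisfies these divisorial inequalities. The one genuinely nontrivial input is finite generation, and this is where properness is used: semiampleness and $\Q$-Cartierness of the $\mathfrak{D}(m)$ reduce the claim, after passing to a finite-index sublattice to make everything Cartier, to the classical finite generation of section rings of semiample divisors on the semiprojective variety $Y$. Finally, $\mathfrak{D}(0)=0$ gives $A[Y,\mathfrak{D}]_{0}=\Gamma(Y,\mathcal{O}_{Y})$, and bigness of $\mathfrak{D}(m)$ for $m\in\ri(\sigma^{\vee})$ forces ratios of degree-$m$ sections to generate $k(Y)$ over $\Gamma(Y,\mathcal{O}_{Y})$; together with the $\dim T$ independent directions carried by the grading this yields $\mathrm{Frac}(A[Y,\mathfrak{D}])=k(Y)(M)$ and hence $\dim X[Y,\mathfrak{D}]=\dim Y+\dim T$.

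For the converse, let $X=\Sp A$ be a normal affine $T$-variety with weight monoid $\Sigma$; set $\sigma^{\vee}:=\cone(\Sigma)\subset M_{\Q}$ and let $\sigma\subset N_{\Q}$ be its dual cone, which is pointed because the $T$-action is faithful. The structural point is that, $A$ being a domain graded by the torsion-free group $M$, the field $K_{0}:=\mathrm{Frac}(A)^{T}$ is the degree-$0$ part of $\mathrm{Frac}(A)$ and each homogeneous component $\mathrm{Frac}(A)_{m}$ is a one-dimensional $K_{0}$-vector space. Trivializing each of these over $K_{0}$ (using a nonzero semi-invariant in each degree along a finite generating set of $\Sigma$) presents the graded pieces as $A_{m}=\{f\in K_{0}:\mathrm{div}(f)+D_{m}\ge 0\}$ for Weil $\Q$-divisors $D_{m}$ on any fixed normal model of $K_{0}$, and multiplicativity of $A$ forces the family to be concave, $D_{m}+D_{m'}\le D_{m+m'}$; one then checks that, without changing the graded pieces, the $D_{m}$ may be taken to equal $\sum_{Z}h_{\Delta_{Z}}(m)\cdot Z$ for a unique collection of $\sigma$-polyhedra $\Delta_{Z}$, with $\Delta_{Z}=\sigma$ for all but finitely many $Z$. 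The substantive work is to choose the model well: take $Y$ normal with $k(Y)=K_{0}$, projective over $\Sp A^{T}$ (possible since $A^{T}$ is finitely generated, $T$ being reductive), and, after a further blow-up and normalization if necessary, such that every $D_{m}$ is $\Q$-Cartier and semiample; existence of such a $Y$ can be obtained from a relative Proj construction over $\Sp A^{T}$. Then $\mathfrak{D}:=\sum_{Z}\Delta_{Z}\cdot Z$ is a $\sigma$-polyhedral divisor with $\mathfrak{D}(m)=D_{m}$ for all $m$, its properness being precisely the semiampleness just arranged together with bigness, the latter forced by the equality $\dim X=\dim Y+\dim T$; comparing graded components yields $A\cong A[Y,\mathfrak{D}]$.

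The step I expect to be the main obstacle is this last construction in the converse: producing a single normal semiprojective $Y$ over which all the divisors $D_{m}$ become simultaneously $\Q$-Cartier and semiample, and then verifying that the associated polyhedral divisor is proper, in particular big on the relative interior of $\sigma^{\vee}$. This is exactly where one must exploit the finite generation of $A$ and the precise way the $M$-grading sits over the affine quotient $\Sp A^{T}$; by comparison, the dictionary between concave support functions and $\sigma$-polyhedra and the remaining divisorial bookkeeping are routine.
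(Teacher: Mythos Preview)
The paper does not prove Theorem~\ref{a-h}; it is quoted from \cite{AH}, and only the converse construction is recalled, in Section~\ref{cah}, following \cite[Section~6]{AH}. So the meaningful comparison is between your converse argument and that exposition.

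Your forward direction is a fair outline of the arguments in \cite{AH}: concavity of support functions gives multiplicativity, normality is checked componentwise via divisorial inequalities, finite generation comes from semiampleness, and bigness on $\ri(\sigma^{\vee})$ yields the dimension count. Nothing is wrong here, though each of these steps is substantial in \cite{AH}.

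For the converse you diverge from the paper's route. You propose to pick some normal model of $K_{0}=k(X)^{T}$, projective over $\Sp A^{T}$, and then blow up ``if necessary'' until all the $D_{m}$ are simultaneously $\Q$-Cartier and semiample; you invoke ``a relative Proj construction'' without saying which one. The construction in Section~\ref{cah} (from \cite{AH}) is concrete: $Y=(X/_CT)_0^{\norm}$ is the normalized main component of the inverse limit of the GIT-quotients $X/_{\lambda}T=\Proj k[X]^{(\chi)}$, and the sheaves $\mathcal{A}_{\chi}$ are the pullbacks $p_{\lambda}^{*}\mathcal{O}(1)$ along the projective maps $p_{\lambda}:Y\to X/_{\lambda}T$. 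Two features of this construction do real work that your sketch leaves unaddressed. First, semiampleness of $\mathfrak{D}(\chi)$ is built in, since $\mathcal{A}_{\chi}$ is by definition the pullback of an ample bundle along a projective morphism; you instead hope to achieve semiampleness by an unspecified birational modification, which is exactly the obstacle you yourself flag. Second, the identity $\Gamma(Y,\mathcal{A}_{\chi})=k[X]_{\chi}$ is not automatic on an arbitrary model of $K_{0}$: in \cite{AH} it relies on the $p_{\lambda}$ being projective with connected fibers (their Lemma~6.1 and Lemma~6.4(ii)), whereas your claim that on ``any fixed normal model'' one has $A_{m}=\{f\in K_{0}:\mathrm{div}(f)+D_{m}\ge 0\}$ need not hold---$A_{m}$ could sit properly inside the sections of its associated divisor. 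In short, your instinct to use relative Proj is correct, but the GIT-limit furnishes the specific Proj that makes both obstacles disappear at once.
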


We will discuss the converse correspondence in Section \ref{cah}.

\subsubsection{Ingredients from GIT}\label{git1}

Let $X$ be an affine $T$-variety. Let us recall the construction of the inverse limit of GIT-quotients.

  The cone $\omega$ generated by the weight monoid $\Sigma$ in the vector space $\mathfrak{X}(T)_{\Q}:=\mathfrak{X}(T)\otimes_{\Z}\Q$ is called the \emph{weight cone}. For any $\chi\in \Sigma$, consider the set of its semistable points
$$X^{ss}_{\chi}:=\bigcup_{r>0}\bigcup_{f\in k[X]_{r\chi}}X_f.$$
 Two characters $\chi_1 \in \Sigma$ and $\chi_2 \in \Sigma$ are called \emph{equivalent} if $X^{ss}_{\chi_1}=X^{ss}_{\chi_2}$. Under this equivalence $\Sigma$ decomposes into finitely many equivalence classes which are forming the GIT-fan $\cQ \subset \mathfrak{X}(T)_{\Q}$ with $\supp (\cQ)= \omega$ (see \cite[Section 2]{BH}). Recall some definitions giving us the construction of the GIT-fan.

\begin{definition} \label{orb} \cite[Definition~2.1]{BH} For any $x\in
X$, the {\it orbit cone} $w_x$ associated to $x$ is the following
convex cone in $\Chi(T)$:
$$w_x:=\cone\{\chi\in \Sigma : \exists\ f\in k[X]_{\chi} {\rm\ such \ that \ } f(x)\ne
0\}.$$
\end{definition}
\begin{definition} \label{git} \cite[Definition~2.8]{BH}
For any character $\chi\in \Sigma$, the associated {\it
GIT-cone}~$\sigma_\chi$ is the intersection of all orbit cones
containing $\chi$: $$\sigma_\chi=\bigcap_{\{x\in X : \chi\in
w_x\}}w_x.$$
\end{definition}

By \cite[Theorem~2.11]{BH}, the collection of GIT-cones forms a fan
$\cQ$ having $\omega$ as its support. Moreover, the following statement holds.
\begin{proposition} \cite[Proposition~2.9]{BH} Let $\chi_1, \chi_2 \in \Sigma$. Then
$X^{ss}_{\chi_1}\subseteq X^{ss}_{\chi_2}$ if and only if
$\sigma_{\chi_1}\supseteq \sigma_{\chi_2}$.
\end{proposition}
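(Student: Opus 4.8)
The plan is to reduce both implications to a single combinatorial reformulation of the semistable locus, the identity $X^{ss}_{\chi}=\{x\in X\ :\ \chi\in w_x\}$, valid for every $\chi\in\Sigma$. Once this is in hand, the proposition follows by elementary manipulation of intersections of cones, so the only real work is in establishing the reformulation.

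First I would fix $x\in X$ and set $S_x:=\{\chi\in\Sigma\ :\ \exists\, f\in k[X]_{\chi}\text{ with }f(x)\ne 0\}$, so that by definition $w_x=\cone(S_x)$. Since $k[X]_0$ contains the nonzero constants, $0\in S_x$; and if $f\in k[X]_{\chi}$ and $g\in k[X]_{\chi'}$ do not vanish at $x$, then $fg\in k[X]_{\chi+\chi'}$ does not vanish at $x$ either. Hence $S_x$ is a submonoid of $\Sigma$. Now $x\in X^{ss}_{\chi}$ holds precisely when $r\chi\in S_x$ for some integer $r>0$. If this is the case then $\chi\in\cone(S_x)=w_x$. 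Conversely, if $\chi\in w_x$, write $\chi=\sum_i\lambda_i s_i$ with $\lambda_i\in\Q_{\ge 0}$ and $s_i\in S_x$; clearing denominators produces an integer $N>0$ with $N\chi=\sum_i(N\lambda_i)s_i$, which lies in $S_x$ because $S_x$ is a monoid, so $x\in X^{ss}_{\chi}$. This proves the reformulation. Two consequences I would record: $\sigma_{\chi}=\bigcap_{x\in X^{ss}_{\chi}}w_x$, and the index set here is nonempty (from $k[X]_{\chi}\ne 0$ one gets $\chi\in w_x$ for some $x$), so in particular $\chi\in\sigma_{\chi}$.

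It then remains to assemble the two directions. If $X^{ss}_{\chi_1}\subseteq X^{ss}_{\chi_2}$, then intersecting the cones $w_x$ over the smaller index set $X^{ss}_{\chi_1}$ can only enlarge the result, so $\sigma_{\chi_1}\supseteq\sigma_{\chi_2}$. Conversely, assume $\sigma_{\chi_1}\supseteq\sigma_{\chi_2}$. Then $\chi_2\in\sigma_{\chi_2}\subseteq\sigma_{\chi_1}$, and for any $x\in X^{ss}_{\chi_1}$ one has $w_x\supseteq\sigma_{\chi_1}$, hence $\chi_2\in w_x$, i.e. $x\in X^{ss}_{\chi_2}$; thus $X^{ss}_{\chi_1}\subseteq X^{ss}_{\chi_2}$. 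I do not expect a genuine obstacle here: the only step requiring care is the clearing-denominators passage from $\chi\in\cone(S_x)$ to $N\chi\in S_x$, which works only because $S_x$ is closed under addition and because the definition of $X^{ss}_{\chi}$ allows $\chi$ to be replaced by an arbitrary positive multiple.
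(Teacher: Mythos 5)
Your proof is correct: the key reformulation $X^{ss}_{\chi}=\{x\in X:\chi\in w_x\}$ (justified via the submonoid $S_x$ and clearing denominators) is exactly the intermediate result on which Berchtold--Hausen base their Proposition~2.9, and the two implications then follow by the elementary cone manipulations you give. The paper itself only cites [BH] without proof, so there is nothing further to compare; your argument faithfully reconstructs the standard one.
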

 For any cone $\lambda\in \cQ$, denote $X^{ss}_{\lambda}:=X^{ss}_{\chi}$, where $\chi$ is an arbitrary character in $\ri(\lambda)$. Let $X/_{\!\!\lambda} T := X^{ss}_{\lambda}\!/\!/T$ be the good quotient under the action of $T$. Varieties $X/_{\!\!\lambda} T$ are called \emph{GIT-quotients}. Notice also that $$X/_{\!\!\lambda} T=\Proj k[X]^{(\chi)}$$ for any $\chi\in \ri(\lambda)$, where $$k[\X]^{(\chi)}:=\bigoplus_{r=0}^{\infty} k[\X]_{r\chi}.$$ In particular, $X/_{\! 0}T=X/\!/T=\Sp k[X]^T$.

 Denote by $q_{\lambda}: X^{ss}_{\lambda}\to X/_{\!\!\lambda}T$ the quotient map.
  We consider natural morphisms between GIT-quotients. Namely, if $\lambda_1 \supset \lambda_2$, where
$\lambda_1, \lambda_2\in \cQ$, then we have the following commutative
diagram:
$$
\xymatrix{
X^{ss}_{\lambda_1} \ar@{->}[d]^{q_{\lambda_1}}\ar@{^{(}->}[r] & X^{ss}_{\lambda_2} \ar@{->}[d]^{q_{\lambda_2}} \\
X/_{\lambda_1}T \ar[r]^{p_{\lambda_1 \lambda_2}} & X/_{\lambda_2}T
}
$$
So the quotient maps $q_{\lambda}: X^{ss}_{\lambda}\to X/_{\!\!\lambda} T$
form a finite inverse system with $q_0: X\to X/\!/T$ sitting at
the end. We have a canonical morphism
$$q : X^{ss}:=\bigcap_{\chi\in
\Sigma}X^{ss}_{\chi}\to{\lim_{\longleftarrow}}\ X/_{\!\!\chi}
T=:X/_CT.$$
The variety $X/_CT$ is called \emph{GIT-limit}.

\begin{definition}
The {\it main component} $(X/_CT)_0$ of the GIT-limit $X/_CT$ is the closure of the image $q(X^{ss})\subset X/_CT$.
\end{definition}
The main component of the GIT-limit is also called the \emph{toric Chow quotient} (see e.g. \cite{CM}). This terminology corresponds to results of \cite{KSZ}, where it was proved that in the case of a projective toric variety $X$ the main component of GIT-limit is, indeed, isomorphic to the Chow quotient by the action of a subtorus.

\subsubsection{The Altmann-Hausen family}\label{cah}

Now we give a construction of the Altmann-Hausen family of an affine normal $T$-variety $X$ following \cite[Section~6]{AH}.

   Let $Y:=(X/_C T)_0^{\norm}$ be the normalization of the main component of the GIT-limit, $q^{\norm}: X^{ss}\rightarrow Y$ be the lifting of $q$ to the normalization, and $p_{\lambda}: Y\rightarrow X/_{\lambda} T$ be the composition of the normalization map and canonical morphisms from GIT-limit to the elements of the inverse system restricted on the main component. For $\lambda_1,\lambda_2\in \cQ,\,\lambda_1\supset \lambda_2$ we have the following commutative diagram:
$$
\xymatrix{
X^{ss} \ar[d]^{q^{\norm}} \ar@{^{(}->}[r] & X^{ss}_{\lambda_1} \ar[d]^{q_{\lambda_1}}\ar@{^{(}->}[r] & X^{ss}_{\lambda_2} \ar[d]^{q_{\lambda_2}} \ar@{^{(}->}[r] & X \ar[ddd]^{q_0}\\
Y \ar[r]^{p_{\lambda_1}} \ar[rrrdd]^{p_0} & X/_{\lambda_1}T \ar[r]^{p_{\lambda_1 \lambda_2}} \ar[rrdd]^{p_{\lambda_1 0}} & X/_{\lambda_2}T \ar[rdd]^{p_{\lambda_2 0}}\\
\\
& & & X/\!/T
}
$$
 As it was shown in \cite[Lemma~6.1]{AH}, the morphisms $p_{\lambda}$ and $p_{\lambda_1 \lambda_2}$ are projective surjections with connected fibers. In particular, $Y$ is projective over the affine variety $X/\!/T=\Sp k[X]^T$. Moreover, the morphisms $p_{\lambda}$ are birational when $\lambda\cap \ri(\sigma)\ne \emptyset$, and, consequently, $\dim Y = \dim X - \dim T$ and $k(Y)=k(X)^T$.

\bigskip

 A character $\chi\in \Sigma$ is called \emph{saturated} if the algebra $k[X]^{(\chi)}$ is generated by elements of degree one. It is well known that for any $\chi\in \Sigma$ there exists an integral $n_{\chi}>0$ such that  $kn_{\chi}\chi$ is saturated for all integral $k>0$. For multigraded version of this notion and an algebraic characterization of GIT-fan see \cite{ArH}.

 For any $\lambda\in \cQ$ and $\chi \in \text{relint}(\lambda)$, we have a sheaf of $\mathcal{O}_{X/_{\lambda}T}$-modules $\mathcal{A}_{\lambda,\chi}:=(q_{\lambda})_{\ast}(\mathcal{O}_{X^{ss}_{\lambda}})_{\chi}$. It is easy to see that the defined sheaf coincides with the twisting sheaf of Serre, if the variety $X/_{\lambda}T$ is considered as $\Proj k[X]^{(\chi)}$. In particular, the sheaf $\mathcal{A}_{\lambda,\chi}$ is invertible if $\chi$ is saturated. For a saturated $\chi$, denote by $\mathcal{A}_{\chi}:=(p_{\lambda(\chi)})^{\ast}(\mathcal{A}_{\lambda(\chi),\chi})$ the invertible sheaves of $\mathcal{O}_Y$-modules.

Firstly, assume that $\chi$ is saturated. If $f\in k[X]_{\chi}$, $\lambda:=\sigma_{\chi}\in \cQ$, then $X_f/\!/T$ is an open affine subset in $X/_{\lambda} T$ and $\mathcal{A}_{\lambda,\chi}|_{X_f/\!/T}=f\cdot \mathcal{O}_{X_f/\!/T}$ (see \cite[Lemma~6.3(ii)]{AH}). Denote $Y_f:=p_{\lambda}^{-1}(X_f/\!/T)$. The open sets $Y_f$ cover $Y$, and by definition of the inverse image we have (see \cite[Lemma~6.4(ii)]{AH})
$$ \mathcal{A}_{\chi}|_{Y_f}=f\cdot \mathcal{O}_{Y_f}\subset f\cdot k(Y)=f\cdot k(X)^T=k(X)_{\chi}.$$
 Consequently, we can consider the sheaves $\mathcal{A}_{\chi}$ as subsheaves of the constant sheaf $k(X)$ on~$Y$.

   For unsaturated $\chi$, we define
$$\mathcal{A}_{\chi}(U):=\{ f\in k(X): f^{n_{\chi}}\in \mathcal{A}_{n_{\chi}\chi}(U) \}.$$
 The defined sheaves together constitute the $\Chi(T)$-graded sheaf of $\mathcal{O}_Y$-algebras $$\mathcal{A}:=\bigoplus_{\chi\in \Sigma} \mathcal{A}_{\chi}.$$ From \cite[Lemma~6.4]{AH} we can see that the multiplication is defined correctly and preserves the grading.
 \begin{definition}The \emph{Altmann-Hausen family} is the family $\psi: \widetilde{X}\rightarrow Y$ of $T$-schemes, where
$\widetilde{X}:=\text{Spec}_Y \ \mathcal{A}$ is the relative spectrum, $\psi$ is the morphism given by $\mathcal{A}_0\rightarrow \mathcal{A}$ and the $T$-action on $\widetilde{X}$ is given by the $\mathfrak{X}(T)$-grading of $\mathcal{A}$.
\end{definition}
By \cite[Lemma~6.4(ii)]{AH}, we have $X=\Sp \Gamma (\widetilde{X},\mathcal{O}_{\widetilde{X}})$.

\bigskip

Summarizing all that we have done in this section, note that from a $T$-variety $X$ we constructed a semiprojective variety $Y$ equipped with a $\Chi(T)$-graded sheaf of $\mathcal{O}_Y$-algebras~$\mathcal{A}$ such that $X$ can be restored from this data.

Denote $\Lambda(T)_{\Q}:=\Lambda(T)\otimes_{\Z} \Q$ and $\sigma:=\omega^{\vee}\subset \Lambda(T)_{\Q}$. One can easily pass from the sheaf of algebras $\mathcal{A}$ to the proper $\sigma$-polyhedral divisor $\mathfrak{D}$ on $Y$ such that
\begin{equation}\label{e3}
\mathcal{A}=\mathcal{A}[Y,\mathfrak{D}].
 \end{equation}
 To do this, let's choose a (non-canonical) homomorphism $s: \mathfrak{X}(T)\rightarrow k(X)$ such that for every $\chi\in\mathfrak{X}(T)$ the function $s(\chi)$ is homogeneous of degree $\chi$. Such homeomorphisms always exist since $T$ acts on $X$ faithfully. Next, for a saturated $\chi\in \omega\cap \mathfrak{X}(T)$ there exists an unique Cartier divisor $\mathfrak{D}(\chi)$ on $Y$ such that $$\mathcal{O}(\mathfrak{D}(\chi))=\frac{1}{s(\chi)}\cdot \mathcal{A}_{\chi}\subset k(Y).$$ For unsaturated $\chi$, define $\mathfrak{D}(\chi):=\frac{1}{n_{\chi}}\cdot \mathfrak{D}(n_{\chi}\chi)$. Now one can check that $\Q$-Cartier divisors $\mathfrak{D}(\chi)$ can be ``glued'' together to form a proper $\sigma$-polyhedral divisor $\mathfrak{D}$ on $Y$ satisfying condition~(\ref{e3}) (see \cite[Section~6]{AH}). The divisor $\mathfrak{D}$ constructed in such way is called \emph{minimal}.

We see that the variety $\widetilde{X}$ appears as a middle step in the correspondence between normal affine $T$-varieties and combinatorial data of this action  $(Y,\mathfrak{D})$ from Theorem \ref{a-h} with an additional minimality condition on $\mathfrak{D}$:
$$
\xymatrix{
& T\curvearrowright\widetilde{X}\ar@/_-1pc/@{-->}[dr]\ar@/_1pc/@<-1ex>[dl] \\
 T\curvearrowright X \ar@/_-1pc/@{->}[ur]  & & \{(Y,\mathfrak{D})\}, \ar@/_1pc/@<-1ex>[ul]
}
$$
where one of the arrows is dashed, because the reconstruction of $\mathfrak{D}$ from $\widetilde{X}$ is not canonical.

 The main result illustrating the role of the Altmann-Hausen family in this assignment is the following one.
\begin{proposition}\cite[Theorem~3.1]{AH}\label{ahm}
The morphism $\psi$ is a good quotient for the $T$-action. The canonical morphism $\varphi: \widetilde{X}\rightarrow X$ is $T$-equivariant, birational and proper. We have the following commutative diagram:
$$
\xymatrix{
X^{ss} \ar@{^{(}->}[r] \ar[dr]_-{q^{\norm}} & X \ar@<-0.5ex>@{-->}[r]_{\varphi^{-1}} \ar@{-->}[d] & \widetilde{X} \ar@<-0.5ex>[l]_{\varphi} \ar[ld]^{\psi} \\
& Y
}
$$
\end{proposition}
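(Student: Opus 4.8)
The plan is to verify the four assertions in turn — that $\psi$ is a good quotient, that $\varphi$ is defined and $T$-equivariant, that the diagram commutes, and that $\varphi$ is proper and birational — assembling the lemmas of \cite[Section~6]{AH} recalled above, and keeping the properness of $\varphi$ for last as the only nontrivial step.

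\emph{$\psi$ is a good quotient.} By construction $\widetilde{X}=\mathrm{Spec}_Y\,\mathcal{A}$ is affine over $Y$ and $\psi$ is induced by the inclusion $\mathcal{A}_0\hookrightarrow\mathcal{A}$ of the degree-zero part, so $\psi$ is affine and $T$-invariant, and $\widetilde{X}$ is normal because $\mathcal{A}=\mathcal{A}[Y,\mathfrak{D}]$ for the proper $\sigma$-polyhedral divisor $\mathfrak{D}$ (Theorem~\ref{a-h}). Now $\psi_{\ast}\mathcal{O}_{\widetilde{X}}=\mathcal{A}$ and, the $T$-action being given by the $\Chi(T)$-grading, $(\psi_{\ast}\mathcal{O}_{\widetilde{X}})^T=\mathcal{A}_0$; so it suffices to identify $\mathcal{A}_0$ with $\mathcal{O}_Y$. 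For $\chi=0$ and any $\lambda\in\cQ$ one has $\mathcal{A}_{\lambda,0}=(q_{\lambda})_{\ast}(\mathcal{O}_{X^{ss}_{\lambda}})_0=\mathcal{O}_{X/_{\lambda}T}$, hence $\mathcal{A}_0=p_{\lambda}^{\ast}\mathcal{O}_{X/_{\lambda}T}=\mathcal{O}_Y$ and $\psi^{\sharp}$ is an isomorphism.

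\emph{The morphism $\varphi$, equivariance, and the diagram.} Since $\Gamma(\widetilde{X},\mathcal{O}_{\widetilde{X}})=\Gamma(Y,\mathcal{A})=k[X]$ by \cite[Lemma~6.4(ii)]{AH}, the canonical affinization morphism $\varphi\colon\widetilde{X}\to\Sp\Gamma(\widetilde{X},\mathcal{O}_{\widetilde{X}})=X$ is defined; it is $T$-equivariant because it is induced by a $\Chi(T)$-graded ring homomorphism, and dominant because its comorphism on global functions is the identity. As $p_{\lambda}$ is birational whenever $\lambda$ meets the relative interior of the weight cone, $\dim\widetilde{X}=\dim Y+\dim T=\dim X$, so $\varphi$ is generically finite. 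Commutativity of the triangle, together with the fact that $\varphi$ is an isomorphism over $X^{ss}$, is read off on the charts $Y_f$ with $f\in k[X]_{\chi}$, $\chi$ saturated and $\sigma_{\chi}$ full-dimensional: there $\mathcal{A}_{\chi}|_{Y_f}=f\cdot\mathcal{O}_{Y_f}$ by \cite[Lemmas~6.3,~6.4]{AH}, and $q^{\norm}$ presents $Y_f$ as $X^{ss}_f/\!/T$.

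\emph{Properness of $\varphi$ — the main point — and birationality.} This is where properness of $\mathfrak{D}$ is essential; it does not follow merely from $\psi$ being affine and $p_0\colon Y\to X/\!/T$ projective. I would first extract from the ampleness of the $\mathfrak{D}(\chi)$ (equivalently, from $k[X]^{(\chi)}$ being generated in degree one for saturated $\chi$) that each $\mathcal{A}_{\chi}$ is generated by its global sections $\Gamma(Y,\mathcal{A}_{\chi})=k[X]_{\chi}$; equivalently $\mathcal{A}_{\chi}(V)=\mathcal{O}_Y(V)\cdot k[X]_{\chi}$ for every affine open $V\subseteq Y$, at least for $\chi$ saturated, which suffices since such $\chi$ are cofinal in $\Sigma$. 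Granting this, one concludes properness in either of two equivalent ways. First option: the morphism $\widetilde{X}\to X\times_{X/\!/T}Y$ given by $\varphi$ and $\psi$ is a closed immersion — the required surjectivity of $k[X]\otimes_{k[X]^T}\mathcal{O}_Y(V)\to\mathcal{A}(V)$ being exactly the identity just recorded — so $\varphi$ is proper, being a closed immersion followed by the base change $X\times_{X/\!/T}Y\to X$ of the proper morphism $p_0$. Second option: verify the valuative criterion directly; given a discrete valuation ring $R$ with fraction field $K$ and a square $\mathrm{Spec}\,K\to\widetilde{X}$, $\mathrm{Spec}\,R\to X$, push forward by $\psi$ and use properness of $p_0$ to extend the $K$-point to $\gamma\colon\mathrm{Spec}\,R\to Y$; then on an affine $V\subseteq Y$ containing the closed point of $\gamma$, every element of $\mathcal{A}(V)$ is an $\mathcal{O}_Y(V)$-linear combination of elements of $k[X]$, all of which land in $R$ (via $\gamma$ and via $\mathrm{Spec}\,R\to X$), giving the desired unique lift. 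Finally, once $\varphi$ is proper, $\varphi_{\ast}\mathcal{O}_{\widetilde{X}}$ is coherent with global sections $k[X]$, hence equals $\mathcal{O}_X$; as $X$ is normal and $\varphi$ generically finite, the generic fibre of $\varphi$ is a single reduced point, so $\varphi$ is birational. In the toric case everything reduces to Proposition~\ref{prop}: $\varphi$ is a toric morphism, and its properness is the equality $\overline{\varphi}_{\Q}^{-1}(|\mathcal{C}_X|)=|\mathcal{C}_{\widetilde{X}}|$, which holds because the fan of $\widetilde{X}$ refines the normal fan of $X$ over the weight cone.
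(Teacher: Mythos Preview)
The paper does not prove this proposition at all: it is quoted as \cite[Theorem~3.1]{AH} and used as a black box in the rest of Section~\ref{sec3-1}. So there is no ``paper's own proof'' to compare your argument against; you have attempted to reconstruct the proof from \cite{AH}, whereas the authors simply import the statement.

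As for your reconstruction: the overall strategy is the right one and matches the line of \cite[Section~3]{AH}, but there is a small gap in the properness step. You claim that $\mathcal{A}_{\chi}(V)=\mathcal{O}_Y(V)\cdot k[X]_{\chi}$ ``at least for $\chi$ saturated, which suffices since such $\chi$ are cofinal in $\Sigma$.'' Cofinality is not enough here: the map $k[X]\otimes_{k[X]^T}\mathcal{O}_Y(V)\to\mathcal{A}(V)$ is graded, and its image in degree $\chi$ is exactly $\mathcal{O}_Y(V)\cdot k[X]_{\chi}$, so surjectivity must be checked degree by degree, including the unsaturated ones. The same issue reappears in your valuative-criterion variant. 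What one actually extracts from the saturated case is that $\widetilde{X}\to X\times_{X/\!/T}Y$ is \emph{finite} (every homogeneous element of $\mathcal{A}(V)$ has a power lying in the image, hence is integral over it, and $\mathcal{A}$ is of finite type over $\mathcal{O}_Y$), which is already sufficient for properness; alternatively, this is exactly what \cite[Theorem~3.1(ii)]{AH} proves by showing the map $\widetilde{X}\to X\times_{X/\!/T}Y$ is projective. Once you replace ``closed immersion'' by ``finite'' (or ``projective'') your argument goes through.
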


\subsubsection{A geometric description of the Altmann-Hausen family}
Define $$W_X:=\overline{\{(x,q(x)):x\in X^{ss}\}}\subset X \times (X/_C T)_{0},$$ and let $p_C$ be projection on the second component.

The following result modulo normalization morphism was stated in \cite[Lemma~1]{V}, where actions of complexity one were considered, but the same proof is valid for a general case.
\begin{proposition}\label{main}
 The Altmann-Hausen family $\psi : \widetilde{X} \rightarrow Y$ is isomorphic to the normalization $W_X^{\text{norm}} \rightarrow (X/_C T)_{0}^{\text{norm}}$ of the family of $T$-schemes $p_C : W_X\rightarrow (X/_C T)_{0}$.
\end{proposition}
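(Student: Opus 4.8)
The plan is to identify both families by realizing each as the relative spectrum over $Y=(X/_C T)_0^{\norm}$ of an explicitly described sheaf of $\Chi(T)$-graded $\mathcal{O}_Y$-algebras, and then to check that these two sheaves agree. On the $\widetilde{X}$ side this sheaf is $\mathcal{A}=\bigoplus_{\chi\in\Sigma}\mathcal{A}_\chi$ by definition. On the $W_X$ side, first note that $W_X$ is irreducible (it is the closure of the graph of $q$ over the irreducible variety $X^{ss}$) and that $p_C\colon W_X\to(X/_C T)_0$ is affine: over each affine chart $X_f/\!\!/T$ of a GIT-quotient $X/_{\lambda}T$ dominating $(X/_C T)_0$ the preimage in $W_X$ is closed in $X\times(X_f/\!\!/T)$ and hence affine. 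Thus $p_C{}_{\ast}\mathcal{O}_{W_X}$ is a quasicoherent sheaf of $\Chi(T)$-graded algebras on $(X/_C T)_0$, and its pullback along the normalization $Y\to(X/_C T)_0$ together with the normalization of $W_X$ realizes $W_X^{\norm}\to Y$ as $\text{Spec}_Y$ of the integral closure $\overline{p_C{}_{\ast}\mathcal{O}_{W_X}\otimes_{\mathcal{O}_{(X/_C T)_0}}\mathcal{O}_Y}$ inside its total ring of fractions.

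The key computation is then local on $Y$. Fix a saturated $\chi\in\ri(\lambda)$, with $f\in k[X]_\chi$, and work over the chart $Y_f=p_\lambda^{-1}(X_f/\!\!/T)$; these cover $Y$. The graph of $q$ over $X_f/\!\!/T$ lives in $X\times(X_f/\!\!/T)$, and on functions this corresponds to the subalgebra of $k[X]\otimes_{k[X]^T}k[X_f/\!\!/T]$ generated by $k[X]$; taking the closure of the graph passes to a subalgebra of $k(X)$, and its $\chi'$-graded piece, after dividing by the chosen homogeneous section $s(\chi')$, is a fractional ideal on $Y_f$ which one recognizes — exactly as in the Altmann--Hausen bookkeeping recalled above via \cite[Lemma~6.3, Lemma~6.4]{AH} — as $\mathcal{O}(\mathfrak{D}(\chi'))$ for the minimal polyhedral divisor $\mathfrak{D}$. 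In other words, the $T$-semi-invariant regular functions on the graph closure restricted to $Y_f$ are precisely $\bigoplus_{\chi'}\mathcal{A}_{\chi'}(Y_f)$ up to integral closure, which is the defining algebra of $\widetilde{X}$ over $Y_f$. Since $\widetilde{X}\to Y$ is already a good quotient with $\widetilde{X}$ normal, no further normalization is needed on that side, and we obtain $W_X^{\norm}\cong\widetilde{X}$ over $Y$; the identification is $T$-equivariant because everything is done grading-by-grading, and compatible with the maps to $X$ since $\Gamma(W_X,\mathcal{O}_{W_X})=\Gamma(\widetilde{X},\mathcal{O}_{\widetilde{X}})=k[X]$ by construction of $W_X$ as a subvariety of $X\times(X/_C T)_0$ (cf.\ \cite[Lemma~6.4(ii)]{AH}).

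The main obstacle I expect is the precise matching in the graded computation: showing that the $\chi'$-graded piece of the ring of functions on the graph closure over $Y_f$ is neither too big nor too small — equivalently, that its integral closure equals $\mathcal{A}_{\chi'}(Y_f)$ and not merely a sub- or super-sheaf. The inclusion in one direction (functions on $W_X$ restrict to sections of $\mathcal{A}$) should be formal from the construction of the $p_\lambda$'s on the main component; the reverse inclusion, that every section of $\mathcal{A}_{\chi'}$ extends to a regular function on the graph closure, is where one must use properness and birationality of $p_\lambda$ on $\ri(\sigma)$ together with $k(Y)=k(X)^T$, and is essentially the content of \cite[Lemma~1]{V} transcribed to arbitrary complexity. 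Once that local identification is in hand, gluing over the cover $\{Y_f\}$ and checking $T$-equivariance are routine.
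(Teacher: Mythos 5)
Your strategy --- realize both sides as $\mathrm{Spec}_Y$ of a $\Chi(T)$-graded sheaf of $\mathcal{O}_Y$-algebras and match the graded pieces --- is a genuinely different route from the paper's, but as written it has a gap exactly at its crux. The whole content of Proposition~\ref{main} is the identity (after integral closure) between the $\chi'$-graded piece of the functions on the graph closure over $Y_f$ and $\mathcal{A}_{\chi'}(Y_f)$; you assert that ``one recognizes'' this via the Altmann--Hausen bookkeeping, and then in your final paragraph you concede that this matching is the main obstacle and that it ``is essentially the content of \cite[Lemma~1]{V} transcribed to arbitrary complexity.'' That is the statement to be proved, not a step you may cite: neither inclusion between $\mathcal{A}_{\chi'}$ and the graded pieces of the (normalized) pushforward of $\mathcal{O}_{W_X}$ is actually established. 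Two smaller unproved claims also appear: that $\Gamma(W_X,\mathcal{O}_{W_X})=k[X]$ does not hold ``by construction'' but needs properness of the projection $W_X\to X$ together with normality of $X$; and the assertion that pulling back $p_{C\ast}\mathcal{O}_{W_X}$ to $Y$ and taking integral closure computes $W_X^{\norm}\to Y$ deserves at least a word.

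The paper sidesteps the graded computation entirely by arguing in the opposite direction. It takes Proposition~\ref{ahm} (\cite[Theorem~3.1]{AH}) as input: $\varphi:\widetilde{X}\to X$ is proper and birational, $\psi$ is a good quotient, and $\psi$ agrees with $q^{\norm}\circ\varphi$ over $X^{ss}$. Hence the pair $(\varphi,\iota\circ\psi)$, where $\iota:Y\to(X/_CT)_0$ is the normalization, lands in the closure of the graph of $q$ and defines a morphism $\alpha:\widetilde{X}\to W_X$ over $(X/_CT)_0$. Since $\iota$ and $\psi$ are affine, so is $\iota\circ\psi=p_C\circ\alpha$ and therefore $\alpha$; since $\varphi$ is proper, $\alpha$ is proper; proper plus affine gives finite, and $\alpha$ is birational, so with $\widetilde{X}$ normal it is the normalization morphism. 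If you wish to keep your sheaf-theoretic route, you must genuinely prove both inclusions --- the ``extension'' direction in particular requires $\mathcal{A}_{\lambda,\chi}|_{X_f/\!/T}=f\cdot\mathcal{O}_{X_f/\!/T}$ together with an argument that the graph closure carries exactly the semi-invariants prescribed by the $\mathcal{A}_{\chi'}$ --- and that is where the real work of the proposition lies.
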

\begin{proof}
Define a morphism $\theta: \widetilde{X} \rightarrow (X/_C T)_{0}$ as a composition $\iota\circ\psi$ of the morphism $\psi : \widetilde{X} \rightarrow (X/_C T)_{0}^{\text{norm}}$ and the normalization morphism $\iota: (X/_C T)_{0}^{\text{norm}}\rightarrow (X/_C T)_{0}$. Due to commutativity of the diagram from Proposition \ref{ahm}, the mappings $\varphi: \widetilde{X}\rightarrow X$ and $\theta: \widetilde{X} \rightarrow (X/_C T)_{0}$ define a morphism $\alpha:=(\varphi,\theta): \widetilde{X} \rightarrow W_X$.

$$
\xymatrix{
\widetilde{X} \ar[dr]^{\alpha}\ar@/_-1pc/[ddrrr]^{\varphi}\ar@/_1pc/[dddrr]_{\theta} \\
& W_X \ar@{^{(}->}[dr]\\
&& X\times (X/_CT)_0 \ar[r]\ar[d] & X\\
&& (X/_CT)_0
}
$$

The morphism $\iota$ is affine as a normalization morphism, $\psi$ is affine as a good quotient, so $\theta$ is affine as a composition of affine morphisms. The morphism $\varphi$ is proper, so $\alpha$ is proper. Further, $\alpha$ is finite, because it is both proper and affine. But $\alpha$ is also birational, hence it is the normalization morphism.
\end{proof}

Note that the variety $W_X$ is equipped with the natural faithful action of the torus $T$, which lifts to an action of $T$ on $\widetilde{X}=W_X^{\text{norm}}$ and defines a $\mathfrak{X}(T)$-grading of the sheaf $\psi_{\ast}(\mathcal{O}_{\widetilde{X}})$. By formula $\mathcal{A}_{\chi}=\psi_{\ast}(\mathcal{O}_{\widetilde{X}})_{\chi}$, one can recover the $\mathfrak{X}(T)$-graded sheaf of $\mathcal{O}_Y$-algebras $\mathcal{A}$, and, as well, the proper polyhedral divisor on $Y$ (see Section \ref{cah}).

Denote by $p_C$ the projection $W_\X\to (\X/_CT)_0$. Note that $p_C$ is the good
quotient. Indeed, it is clear that $p_C$ is affine, $T$-invariant, and $p_C^{\sharp}: \mathcal{O}_{(\X/_CT)_0} \rightarrow (p_C)_{\ast}(\mathcal{O}_{W_X})^T$ is injective. To prove the surjectiveness of $p_C^{\sharp}$, use the fact that functions which are constant on the fibers come from the base.

We have the following commutative diagram:

$$
\xymatrix{
W_\X \ar[d]_{p_C} \ar[r]^{p^W_{\X}} & \X \ar[d]^{\pi} \\
(\X/_CT)_0  \ar[r]_{p^W_{\X}/\!/T} & \X/\!/T,
}
$$
where $p^W_{\X}$ denotes the projection on $\X$.

\medskip

\begin{example}Let $X=\A^n$, $T=k^{\times}$ and $T$ act on $X$ by scalar multiplication, i.e. $t\cdot (x_1,\ldots,x_n)=(tx_1,\ldots,tx_n)$. Then the $\Z$-grading of the algebra $k[X]=k[x_1,\ldots,x_n]$ is given by $\deg(x_i)=1$ for $i=1,\ldots,n$.

For this action, GIT-fan consists of two cones $\lambda_0$ and $\lambda_1$:
\begin{center}
\begin{picture}(90,20)
\put(0,10){\line(1,0){90}}
\put(45,10){\circle*{3}}
\linethickness{0.34mm}
\put(45,10){\line(1,0){45}}
\put(42,0){$0$}
\put(63,0){$1$}
\put(17,0){$-1$}
\put(42,13){$\lambda_0$}
\put(68,13){$\lambda_1$}
\end{picture}
\end{center}
 We have $$X^{ss}_{\lambda_0}=X,$$ $$X/_{\lambda_0} T=X/\!/T=\Sp k[X]^T=\{pt\};$$  $$X^{ss}_{\lambda_1}=X\backslash\{x_1=\ldots=x_n=0\},$$ $$X/_{\lambda_1}T=\Proj k[x_1,\ldots,x_n]=\P^{n-1}.$$ The inverse limit $X/_C T$ of GIT-system is equal to the variety $X/_{\lambda_1}T$, so $$Y=(X/_C T)_0=X/_C T=X/_{\lambda_1}T=\P^{n-1},$$ $$X^{ss}=X^{ss}_{\lambda_1}=X\backslash\{x_1=\ldots=x_n\}.$$ In coordinates the map $q^{\norm}: X^{ss}\rightarrow Y$ is given by $q^{\norm}(x_1,\ldots,x_n)=[x_1:\ldots :x_n]$. The variety $W_X\subset X\times (X/_C T)_0 =\A^n\times \P^{n-1}$ is defined by the equations $x_iy_j=x_jy_i$, where $(x_1,\ldots,x_n)$ are coordinates on $\A^n$, and $[y_1:\ldots :y_n]$ are homogeneous coordinates on $\P^{n-1}$, i.e. $W_X$ is the blowing-up of $\A^n$ at the point $(0,\ldots,0)$. On the other hand  $$\widetilde{X}=\text{Spec}_Y \mathcal{A}=\text{Spec}_Y \bigoplus_{n\in \Z_{\geqslant 0}} \mathcal{A}_n=$$ $$=\text{Spec}_Y\, \mathcal{O}_Y\oplus \bigoplus_{n\in \Z_{> 0}} \mathcal{A}_{\lambda_1,n}=\text{Spec}_Y \bigoplus_{n\in \Z_{\geqslant 0}} \mathcal{O}(n)=\text{Tot}(\mathcal{O}(-1))$$ is the total space of the tautological line bundle on $\P^{n-1}$. It is well-known that these varieties are isomorphic.
\end{example}

\subsection{The toric Hilbert scheme and its main component}\label{sec3-2}

Let $\X$ be an irreducible affine $T$-variety. In the following section we give an exposition of basic properties on multigraded Hilbert schemes and study some particular cases of this notion. See \cite[Section~3]{B} for some generalizations of these results on the case of $G$-variety $X$ with a reductive algebraic group $G$.

\subsubsection{Definitions and basic facts on multigraded Hilbert schemes}

\begin{definition}
 Given a function $h:\Chi(T)\to \N$, a family $p: F\to S$ of affine $T$-schemes \emph{has Hilbert function} $h$ if for every $\chi\in \Chi(T)$ the sheaf of $\Oo_S$-modules $p_*(\Oo_F)_\chi$ is locally free of rank $h(\chi)$.
\end{definition}

Note that the morphism $p$ is flat. If $h(0)=1$, then $p$ is the good quotient of $X$ by the action of $T$.

The following definition was introduced in \cite{HS}.

\begin{definition}\label{Def}  Given a function $h:\Chi(T)\to \N$, the {\it Hilbert functor}
is the contravariant functor $\H^h_{\X, T}$ from the category of schemes to the category of sets
assigning to any scheme $S$ the set of all closed $T$-stable subschemes $Z\subseteq S\times \X$ such that
the projection $p:Z\to S$ is a (flat) family of affine $T$-schemes with Hilbert function $h$.
\end{definition}

In \cite[Theorem 1.1]{HS} it was proved that there exists a quasiprojective scheme $H^h_{V, T}$ which represents
this functor in the case when $\X$ is a finite-dimensional $T$-module $V$; the scheme $H^h_{V, T}$ is called the {\it multigraded Hilbert scheme}.
In the case of an arbitrary $\X$ there exists a $T$-equivariant closed immersion $\X\hookrightarrow V$, where $V$ is a finite-dimensional $T$-module.
Then the Hilbert functor $\H^h_{\X, T}$ is represented by a closed subscheme $H^h_{\X, T}$ of $H^h_{V, T}$ (see
\cite[Lemma 1.6]{AB}). The scheme $H^h_{\X, T}$ is called the {\it invariant Hilbert scheme}.

Recall that the {\it universal family} $U^h_{\X, T}$ is the element of $\H^h_{\X, T}(H^h_{\X, T})$ corresponding to the identity map $\{ {\rm Id}: H^h_{\X, T} \to H^h_{\X, T}\}\in$ $ \underline{H^h_{\X, T}}(H^h_{\X, T})={\rm Mor}(H^h_{\X, T}, H^h_{\X, T})$. So $U^h_{\X, T}$ is the closed subscheme of $H^h_{\X, T}\times \X$ such that for any
$Z\in \H^h_{\X, T}(S)$ we have $Z=U^h_{\X,
T}\times_{H^h_{\X, T}} S$. In particular, for any $T$-equivariant closed immersion of $\X$ in a finite-dimensional $T$-module $V$, we have the following cartesian diagram:
$$
\xymatrix{
U^h_{\X,T} \ar[d] \ar@{^{(}->}[r] & U^h_{V,T} \ar[d] \\
H^h_{\X,T}  \ar@{^{(}->}[r] & H^h_{V,T}.
}
$$

\begin{lemma} \label{l0} Assume that $h(0)=1$.  Then we have the following commutative diagram:

$$
\xymatrix{
U^h_{\X,T} \ar[d]^{p_H} \ar[r]^{p_\X} & \X \ar[d]^{\pi} \\
H^h_{\X,T}  \ar[r]^{p_\X/\!/T} & \X/\!/T,
}
$$
where $p_\X$ is the projection, and $p_\X/\!/T$ assigns to any family its quotient by $T$.
The morphisms $p_\X$ and $p_\X/\!/T$ are projective.
\end{lemma}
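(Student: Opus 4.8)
The plan is to produce the morphism $p_\X/\!/T$ together with the commutative square, and then to reduce the projectivity of the two horizontal arrows to a single statement about the multigraded Hilbert scheme of a $T$-module.

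First I would note that the hypothesis $h(0)=1$ makes $p_H\colon U^h_{\X,T}\to H^h_{\X,T}$ a good quotient for the $T$-action: it is affine and $T$-invariant by the definition of a family of affine $T$-schemes, and $(p_H)_\ast(\Oo_{U^h_{\X,T}})^T=(p_H)_\ast(\Oo_{U^h_{\X,T}})_0$ is locally free of rank $h(0)=1$, the unit $\Oo_{H^h_{\X,T}}\to(p_H)_\ast(\Oo_{U^h_{\X,T}})_0$ being an isomorphism (on the fibre over a point $[Z]$ it is the structural inclusion $k\hookrightarrow k[Z]^T$, which is onto since $\dim_k k[Z]^T=h(0)=1$). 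The projection $p_\X\colon U^h_{\X,T}\to\X$ is $T$-equivariant, so $\pi\circ p_\X$ is $T$-invariant; since a good quotient is in particular a categorical quotient, $\pi\circ p_\X$ factors uniquely through $p_H$, and this factorization is by definition $p_\X/\!/T\colon H^h_{\X,T}\to\X/\!/T$. This simultaneously produces the morphism and the commutativity of the square; unravelling the universal property one sees that $p_\X/\!/T$ sends a family $Z\in\H^h_{\X,T}(S)$ to the composite $S\cong Z/\!/T\to\X/\!/T$, that is, it takes the quotient of the family.

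Next, for the projectivity, I would reduce to the case $\X=V$ a $T$-module. A $T$-equivariant closed immersion $\X\hookrightarrow V$ induces closed immersions $H^h_{\X,T}\hookrightarrow H^h_{V,T}$, $U^h_{\X,T}\hookrightarrow U^h_{V,T}$ and $\X/\!/T=\Sp k[\X]^T\hookrightarrow\Sp k[V]^T=V/\!/T$, compatible with the quotient maps (a diagram chase with the universal families, using that $k[V]\twoheadrightarrow k[\X]$ is surjective in degree $0$); hence $p_\X/\!/T$ is, up to the closed immersion $\X/\!/T\hookrightarrow V/\!/T$, the restriction of $p_V/\!/T$ to the closed subscheme $H^h_{\X,T}$. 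On the other hand, the commutativity of the square forces $U^h_{\X,T}\hookrightarrow H^h_{\X,T}\times\X$ to factor through a closed immersion into $H^h_{\X,T}\times_{\X/\!/T}\X$, so that $p_\X$ is that closed immersion followed by the projection $H^h_{\X,T}\times_{\X/\!/T}\X\to\X$, which is the base change of $p_\X/\!/T$ along $\pi$. Using that closed immersions are projective, that projective morphisms are stable under composition and base change, and that $f$ is projective whenever $g$ is separated and $g\circ f$ is projective, both assertions reduce to the single statement that $p_V/\!/T\colon H^h_{V,T}\to V/\!/T$ is projective.

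Finally, this last point is where the real work lies. The morphism $p_V/\!/T$ is quasiprojective (by the construction of \cite{HS}, $H^h_{V,T}$ is quasiprojective over $k$, hence over $V/\!/T$ via the factorization above), so it remains to prove properness, which I would do with the valuative criterion. Given a DVR $R$ with fraction field $K$, a morphism $\Sp K\to H^h_{V,T}$ — equivalently a $T$-stable ideal $I_K\subset k[V]\otimes_k K$ with $(k[V]\otimes_k K)/I_K$ of Hilbert function $h$ — and a compatible morphism $\Sp R\to V/\!/T$, the only candidate lift $\Sp R\to H^h_{V,T}$ is the $T$-stable ideal $I_R:=\ker\bigl(k[V]\otimes_k R\to(k[V]\otimes_k K)/I_K\bigr)$. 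One must then check that $(k[V]\otimes_k R)/I_R$ is a family of affine $T$-schemes over $\Sp R$ with Hilbert function $h$: it is $R$-flat, being torsion-free over the DVR $R$; its degree-$0$ part equals $R$ (here one uses the compatibility with $\Sp R\to V/\!/T$); and, granting that the graded pieces are finitely generated over $R$, they are then free, of the ranks prescribed by $h$, because $I_R\otimes_R K=I_K$ by faithfully flat base change along $R\to K$. Uniqueness of the lift is forced by $R$-flatness. I expect the main obstacle to be precisely this verification — the finiteness of the graded pieces of the limiting algebra together with the identification of its degree-$0$ part — which is exactly the statement that flat limits of families of affine $T$-schemes with Hilbert function $h$ again have Hilbert function $h$.
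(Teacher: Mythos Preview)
Your plan matches the paper's proof in structure: construct the square, show $p_\X/\!/T$ is projective, then deduce projectivity of $p_\X$ from the closed immersion $U^h_{\X,T}\hookrightarrow H^h_{\X,T}\times_{\X/\!/T}\X$ together with base change --- this last step is verbatim the paper's argument. For the square the paper simply says ``functors of points'', which unwinds to your categorical-quotient description; for projectivity of $p_\X/\!/T$ the paper cites \cite[Lemma~3.3]{C} (and points to \cite[Proposition~3.15]{B}), whereas you sketch a direct proof via the valuative criterion after reducing to a $T$-module $V$. Your sketch is sound, and the obstacle you flag is not serious: each $k[V]_\chi$ is a finitely generated $k[V]^T$-module by reductivity of $T$, so once the degree-$0$ part of your limit algebra is identified with $R$ via the given map $\Sp R\to V/\!/T$, the degree-$\chi$ part is a finitely generated torsion-free, hence free, $R$-module whose rank is read off after tensoring with $K$; uniqueness is exactly your $\pi$-saturation remark. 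In short, you trade a citation for an explicit properness argument, but otherwise the two proofs coincide.
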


\begin{proof} The commutativity of the diagram can  be  seen easily
by considering the corresponding morphisms of functors of points.
The morphism $p_\X/\!/T$ is projective by \cite[Lemma 3.3]{C}.
The morphism $U^h_{\X,T}\to H^h_{\X, T}\times_{\X/\!/T}\X$ is a
closed embedding, so the morphism $p_\X$ is projective. See also \cite[Proposition~3.15]{B}.

\end{proof}

\subsubsection{Definition of main component}\label{mc}

Let $$\X^s:=\bigcap_{\chi\in \Sigma}\bigcup_{f\in k[\X]_\chi}
\X_f.$$ Note that, if $x\in \X^s$, then $\overline{Tx}$ has the
following Hilbert function:

$$h_\Sigma(\chi):= \left\{ \begin{array}{rl}
 1 & \mbox{if}\ \ \chi\in \Sigma, \\
 0 & \mbox{otherwise.}
\end{array}\right. $$
Let $X_\Sigma$ denote the affine toric $T$-variety with the weight monoid $\Sigma$; then $\overline{Tx}\simeq X_{\Sigma}$ for $x\in \X^s$.
We denote by $H$ the invariant Hilbert scheme $H^{h_\Sigma}_{\X,T}$; it is called the {\it toric Hilbert scheme}  \cite{PS}. So for any $x\in \X^s$ we have the $k$-rational point $\overline{Tx}\in H$. Also denote $U:=U^{h_\Sigma}_{\X,T}$, $\H=\H^{h_\Sigma}_{\X,T}$.

Let $\X^s\times^T X_{\Sigma}$ be the quotient of the variety $\X^s\times X_{\Sigma}$ by the action of $T$ given by $t\cdot (x, y)=(t^{-1}x, ty)$.

\begin{lemma} \label{l1} There exists the geometric quotient $\pi^{s}:\X^{s}\to \X^{s}/T$ and an open embedding of $\X^{s}/T$ in
the toric Hilbert scheme $H$. Moreover, we have the following Cartesian diagram:

$$
\xymatrix{
\X^s\times^T X_{\Sigma} \ar[d] \ar@{^{(}->}[r] & U \ar[d]^{p_H}\\
\X^{s}/T  \ar@{^{(}->}[r] & H.
}
$$

\end{lemma}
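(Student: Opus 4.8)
The plan is to first construct the geometric quotient $\pi^s: X^s \to X^s/T$ and then identify its image in $H$ together with the universal family over it. For the existence of the quotient, I would argue that $T$ acts freely on $X^s$: by definition of $X^s$, for every $\chi \in \Sigma$ there is some $f \in k[X]_\chi$ not vanishing at a given point $x$, so the orbit map $T \to Tx$ is injective (the stabilizer is trivial because the characters in $\Sigma$ generate $\Chi(T)$ when $T$ acts faithfully) and, more importantly, all orbits in $X^s$ are closed of maximal dimension and the orbit space is separated. Concretely, $X^s$ is covered by the $T$-invariant affine open sets of the form $X_{f_1} \cap \dots \cap X_{f_r}$ with $f_i \in k[X]_{\chi_i}$ and $\{\chi_i\}$ generating $\Chi(T)$; on each such chart one has a free action with a ring of invariants whose spectrum is the quotient chart, and these glue to $X^s/T$. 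Alternatively, one may invoke that $X^s \subset X^{ss}_\lambda$ for $\lambda \in \cQ$ with $\lambda \cap \ri(\sigma) \neq \emptyset$ and that on the stable locus the GIT good quotient is a geometric quotient with trivial stabilizers; this is the cleanest route given the machinery already set up in Section \ref{git1}.

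Next I would produce the morphism $X^s/T \to H$. For each $x \in X^s$ the orbit closure $\overline{Tx} \subseteq X$ is a closed $T$-subscheme with Hilbert function $h_\Sigma$, as noted in the text, hence defines a $k$-point of $H$; to get a morphism of schemes I would exhibit the family directly. Consider $X^s \times X \supseteq$ the incidence subscheme $\{(x,z) : z \in \overline{Tx}\}$; better, form $X^s \times^T X_\Sigma$ as in the statement and map it to $X^s/T \times X$ via $(x,y) \mapsto ([x], \text{(the point } y \cdot x^{-1}\text{-translated into the orbit of }x))$ — more precisely, using a $T$-equivariant identification $\overline{Tx} \cong X_\Sigma$ that varies algebraically over $X^s$. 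This realizes $X^s \times^T X_\Sigma$ as a closed $T$-stable subscheme of $(X^s/T) \times X$, flat over $X^s/T$ with fiber over $[x]$ equal to $\overline{Tx} \cong X_\Sigma$, hence with Hilbert function $h_\Sigma$. By the universal property of $H = H^{h_\Sigma}_{X,T}$ (Definition \ref{Def} and the representability cited from \cite{HS}, \cite{AB}), this family is classified by a unique morphism $j: X^s/T \to H$, and the family itself is the pullback $j^* U$, which gives the commutativity of the Cartesian square once we know $j$ is an immersion.

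To see that $j$ is an open embedding, I would check it is a monomorphism that is étale (equivalently, unramified and flat) onto its image, and that the image is open. Injectivity on points is clear since $\overline{Tx_1} = \overline{Tx_2}$ as subschemes of $X$ forces $Tx_1 = Tx_2$ (the open orbits coincide), i.e. $[x_1] = [x_2]$. For the infinitesimal statement, the tangent space to $H$ at $[\overline{Tx}]$ is $\Hom_{k[X]}^T(I, k[X]/I)_0$ where $I$ is the ideal of $\overline{Tx}$; since $\overline{Tx} \cong X_\Sigma$ is a reduced toric orbit closure on which $T$ acts freely along the open orbit, one computes that this equals the tangent space to $X^s/T$ at $[x]$ — this is the analogue, in the toric Hilbert scheme setting, of the statement that regular orbits give smooth points of the Hilbert scheme. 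Openness of the image then follows because $j$ is an immersion (monomorphism, locally of finite type, and formally unramified) whose source has the same dimension $\dim X - \dim T$ as the main component of $H$, so it is open onto its image in that component. Finally, the Cartesian square is immediate: pulling back $U \to H$ along $j$ gives the family classified by $j$, which by construction is $X^s \times^T X_\Sigma \to X^s/T$.

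The main obstacle I anticipate is the careful construction of the family $X^s \times^T X_\Sigma \hookrightarrow (X^s/T) \times X$ as a genuine closed $T$-subscheme flat over $X^s/T$ — in particular producing the algebraic (not just fiberwise) $T$-equivariant identification of $\overline{Tx}$ with the fixed toric model $X_\Sigma$ as $x$ varies, and verifying flatness. Everything else (freeness of the action, the universal property giving $j$, injectivity on points) is comparatively formal; the tangent-space computation showing $j$ is unramified is the one genuinely technical point, and it is where the hypothesis $x \in X^s$ (rather than merely $x \in X^{ss}$) is essential.
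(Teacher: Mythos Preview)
Your first two steps align with the paper: it covers $X^s$ by affine charts $X_{f_1\cdots f_r}$ on which $k[X_{f_1\cdots f_r}] \cong k[X_{f_1\cdots f_r}]^T \otimes k[T]$ (so $\pi^s$ is a locally trivial principal $T$-bundle), and it builds the family $X^s \times^T X_\Sigma \hookrightarrow (X^s/T)\times X$ chart-by-chart via the surjections $k[X]\otimes k[X_{f_1\cdots f_r}]^T \twoheadrightarrow \bigoplus_{\chi\in\Sigma} k[X_{f_1\cdots f_r}]_\chi$; flatness with Hilbert function $h_\Sigma$ is then immediate, so the obstacle you anticipate dissolves.

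The third step is where you diverge from the paper and where there is a gap. You propose to prove that the classifying map $j:X^s/T\to H$ is an open immersion via an \'etale-monomorphism argument, resting on a tangent-space identification that you assert but do not carry out. Even granting that $dj$ is bijective on tangent spaces, this yields only unramifiedness; to conclude \'etaleness you would still need flatness, which in turn would require knowing that $H$ is smooth (or at least of the expected local dimension and Cohen--Macaulay) along the image---none of which is available a priori. Your fallback (``monomorphism, locally of finite type, formally unramified'' gives an immersion, and then equal dimension forces openness) is not valid as stated either. The paper takes a different and cleaner route: it singles out the locus $H^s\subset H$ over which the fibres of $p_H$ are irreducible and reduced, which is open by \cite[Theorem~12.1.1]{EGA}, observes that $j$ lands in $H^s$, and then constructs an explicit inverse $\Phi':H^s\to X^s/T$ on the level of functors of points---for $Z\in\underline{H^s}(S)$ one sets $Z^s:=Z\cap(S\times X^s)$, so that $Z^s/T=S$ and $Z=Z^s\times^T X_\Sigma$, and $\Phi'(Z)$ is the map $S\to X^s/T$ induced by the projection $Z^s\to X^s$. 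This bypasses deformation theory entirely and identifies the image of $j$ as exactly $H^s$.
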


\begin{proof}

Step 1. There exists the categorical quotient $\pi^{s}:\X^{s}\to
\X^{s}/T$. Moreover, $\pi^{s}$ is a locally trivial bundle.
Indeed, let $\chi_1,\ldots,\chi_r\in \Sigma$ be such that the
algebra $k[\X]$ is generated by $k[\X]_{\chi_i}$, $i=1\ldots, r$.
Then $\X^s$ is covered by  $T$-invariant open affine subschemes
$$\X_{f_1\cdot\ldots\cdot f_r}=\Sp(k[\X]_{f_1\cdot\ldots\cdot f_r}),$$ where $f_i\in k[\X]_{\chi_i}$,
$f_i\ne 0.$ Fix a basis $e_1,\ldots, e_d\in \Chi(T)$. Since the
characters $\chi_1,\ldots,\chi_r$ generate $\Sigma$ and,
consequently, $\Chi(T)$, we can choose non-zero elements $h_l\in
k[\X_{f_1\cdot\ldots\cdot f_r}]_{e_l}$. Then we have
$$k[\X_{f_1\cdot\ldots\cdot f_r}]\simeq k[\X_{f_1\cdot\ldots\cdot
f_r}]^T\otimes k[h_1^{\pm 1}, \ldots, h_d^{\pm 1}]\simeq k[\X_{f_1\cdot\ldots\cdot
f_r}]^T\otimes k[T].$$ Note that these isomorphisms  satisfy the
compatibility conditions and the variety $\X^{s}/T$ obtained by
gluing of affine charts $\X_{f}/T$ is separated. Indeed, we have
to show that the diagonal morphism $$\X_{fg}/T\to \X_{f}/T\times
\X_{g}/T$$ is closed for any $\X_{f}/T=\Sp (k[\X_{f}]^T)$ and
$\X_{g}/T=\Sp (k[\X_{g}]^T)$, where $f=f_1\cdot\ldots\cdot f_r$,
$g=g_1\cdot\ldots\cdot g_r$ and $f_i, g_i\in k[\X]_{\chi_i}$. This
is equivalent to showing that the corresponding homomorphism of
algebras $$k[\X_{f}]^T\otimes k[\X_{g}]^T\to k[\X_{fg}]^T$$ is
surjective. But this is clear since for any element
$\frac{h}{(fg)^k}\in k[\X_{fg}]^T$ we have
$\frac{h}{(fg)^k}=\frac{h}{f^{2k}}\frac{f^k}{g^k}$.

Step 2. Let us prove that $\X^{s}/T$ represents an open subfunctor of
$\H$. Consider the family
$$p^s:\X^s\times^TX_\Sigma\to \X^s/T.$$ We shall show that this is
the universal family over $\X^s/T$. Indeed, $\X^s\times^TX_\Sigma$
is a locally trivial bundle over $\X^s/T$ with fiber $X_\Sigma$.
Further, there is a canonical closed embedding $\X^s\times^T
X_\Sigma\subset \X\times (\X^s/T)$, which is locally given by the
surjective homomorphisms of algebras $$k[\X]\otimes
k[\X_{f_1\cdot\ldots\cdot f_r}]^T\to \bigoplus_{\chi\in \Sigma}
k[\X_{f_1\cdot\ldots\cdot f_r}]_\chi$$ ($\X^s\times^TX_\Sigma$ is
covered by the open affine subschemes $\Sp \bigoplus_{\chi\in
\Sigma} k[\X_{f_1\cdot\ldots\cdot f_r}]_\chi$).  This gives us an
element in $\underline{H}(\X^{s}/T)$, i.e., we have the
following cartesian diagram:

$$
\xymatrix{
\X^s\times^T X_{\Sigma} \ar[d] \ar[r] & U \ar[d]^{p_H}\\
\X^{s}/T  \ar[r] & H.
}
$$

Step 3.  It is clear that the image of $\X^{s}/T$  lies in the locus $H^s$ of $H$ where the fibers of the universal family are irreducible and reduced. By \cite[Theorem~12.1.1]{EGA}, it follows that $p_H^{-1}(H^s)$ is an open subscheme of $U$. Since $p_H^{-1}(H^s)$ is $T$-invariant and  $p_H$ maps closed $T$-invariant subsets to closed subsets, it follows that  $H^s$ is an open subscheme of $H$.
 We shall show that the morphism $\Phi:\X^{s}/T\to H^s$ is an isomorphism.
Given a morphism $\phi: S\to \X^{s}/T$, we have the following commutative diagram:

$$
\xymatrix{
Z \ar[d] \ar[r] & \X^s\times^TX_\Sigma \ar[d] \ar[r] & U^s \ar[d] \\
S  \ar[r] & \X^{s}/T \ar[r] & H^s,
}
$$
where $U^s:=H^s\times_{H}U$ and $Z:= S\times_{H^s}U^s$ is the image of $\phi$ in $\underline{H^s}(S)$. Note that all the squares of this  diagram are cartesian.
In particular, it follows that $Z=Z^s\times^TX_\Sigma$, where $Z^s:= S\times_{\X^{s}/T} \X^s = Z\cap(S\times \X^s)$.

Conversely, let us construct the inverse morphism $\Phi':H^s\to \X^{s}/T$. Let $Z\in \underline{H^s}(S)$. Consider $Z^s: = Z\cap(S\times \X^s)$;
then $Z^s/T=S$.  So the projection $Z^s\to \X^s$ defines a morphism of quotients $\{S\to \X^s/T\}\in \underline{\X^s/T}(S)$. Moreover, $Z^s=S\times_{\X^s/T} \X^s$ and $Z=Z^s\times^TX_\Sigma$. Thus we see that $\Phi\circ\Phi'={\rm Id}_{H^s}$ and $\Phi'\circ\Phi={\rm Id}_{\X^{s}/T}$.

\end{proof}

\begin{definition}
 The {\it main component} $H_0$ of the toric Hilbert scheme $H$ is the closure in $H$ of the image of $\X^{s}/T$.
\end{definition}

By Lemma \ref{l1}, it follows that the main component $H_0$ is an irreducible component of~$H$.  Consider also the {\it main component} of the universal family $U_0:=p_H^{-1}(H_0)$. Denote by $p_0$ the restriction of $p_H$ on $U_0$.

\begin{lemma}
 The main component $U_0$ is an irreducible component of $U$.
\end{lemma}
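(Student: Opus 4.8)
The plan is to show that $U_0 = p_H^{-1}(H_0)$ is irreducible of the expected dimension and that it is not properly contained in any larger irreducible subset of $U$. First I would establish irreducibility: by Lemma \ref{l1} we have the Cartesian diagram exhibiting $\X^s\times^T X_{\Sigma}$ as the open subset $p_H^{-1}(\X^s/T)\subset U$ lying over $\X^s/T\subset H_0$. Since $\X^s$ is irreducible (it is an open subset of the irreducible variety $\X$) and the projection $\X^s\times X_{\Sigma}\to\X^s$ has irreducible fibers isomorphic to the toric variety $X_{\Sigma}$, the product $\X^s\times X_{\Sigma}$ is irreducible, hence so is its geometric quotient $\X^s\times^T X_{\Sigma}$. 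Therefore this open subset of $U_0$ is irreducible, and since $H_0$ is by definition the closure of $\X^s/T$ in $H$, the set $\X^s\times^T X_{\Sigma}$ is dense in $U_0 = p_H^{-1}(H_0)$ — this needs the fact that $p_H$ is surjective onto $H_0$ with no fiber contributing an extra component, which follows because $p_0\colon U_0\to H_0$ is projective (restriction of the projective morphism $p_H$ of Lemma \ref{l0}) with all fibers of dimension $\dim X_{\Sigma} = \dim T$, so $U_0$ is the closure of $p_H^{-1}(\X^s/T)$ and hence irreducible.

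Next I would compute $\dim U_0$. Since $\pi^s\colon\X^s\to\X^s/T$ is a locally trivial $T$-bundle (Step 1 of Lemma \ref{l1}), $\dim(\X^s/T) = \dim\X - \dim T$, and the fibers of $p_0$ over the dense open $\X^s/T$ are copies of $X_{\Sigma}$, which has dimension $\dim T$; hence $\dim U_0 = (\dim\X - \dim T) + \dim T = \dim\X$.

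Then I would argue that $U_0$ is a maximal irreducible closed subset. Suppose $U_0\subseteq C$ for some irreducible closed $C\subseteq U$. Applying $p_H$ gives $H_0\subseteq \overline{p_H(C)}$; since $p_H(C)$ is irreducible and $H_0$ is an irreducible component of $H$ (stated just after the definition of $H_0$, via Lemma \ref{l1}), we get $\overline{p_H(C)} = H_0$, i.e. $C\subseteq p_H^{-1}(H_0) = U_0$. Hence $C = U_0$, so $U_0$ is indeed an irreducible component of $U$.

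The main obstacle I anticipate is the density claim in the first step: one must rule out the possibility that $p_H^{-1}(H_0)$ acquires additional irreducible components sitting over the boundary $H_0\setminus(\X^s/T)$ — equivalently, that no fiber of $p_0$ over a boundary point has dimension exceeding $\dim T$. This is where projectivity and flatness of $p_0$ are essential: $p_0$ is flat with Hilbert function $h_\Sigma$, so every fiber is a subscheme of $\X$ with the fixed Hilbert function of the $\dim T$-dimensional toric variety $X_\Sigma$, forcing all fibers to have the same dimension $\dim T$; combined with properness of $p_0$, this forces $U_0$ to coincide with the closure of the generic part and to be irreducible of dimension $\dim\X$. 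Everything else is a routine application of the Cartesian diagrams and the already-established irreducibility of $H_0$.
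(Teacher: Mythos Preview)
Your plan follows the same strategy as the paper: reduce to showing that $U_0$ is irreducible by proving that the open subset $\X^s\times^T X_\Sigma$ is dense in $U_0=p_H^{-1}(H_0)$. Your maximality argument (that any irreducible closed $C\supseteq U_0$ must satisfy $\overline{p_H(C)}=H_0$ and hence $C\subseteq p_H^{-1}(H_0)=U_0$) is correct and gives a clean alternative to the paper's implicit reasoning that a closed irreducible subset containing a nonempty open subset of $U$ is automatically a component.

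There is, however, a genuine gap at the crucial point. You correctly isolate the obstacle---ruling out extra irreducible components of $U_0$ lying over the boundary $H_0\setminus(\X^s/T)$---but then write that equidimensional fibres ``combined with properness of $p_0$'' force $U_0$ to be the closure of the generic part. Properness is the wrong hypothesis here: a proper morphism over an irreducible base with irreducible generic fibre and all fibres of the same dimension can still have a reducible source (e.g.\ adjoin to a copy of $Y$ an extra isolated closed point mapping to some $y_0\in Y$; this is proper with all fibres zero-dimensional, but the total space is reducible---what fails is flatness). What actually excludes such stray components is \emph{flatness}, used not merely to pin down the Hilbert function of the fibres but to control the dimension of every irreducible component of the total space. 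The paper argues as follows: by \cite[Lemma~3.5]{C} every irreducible component of every fibre of $p_H$ has dimension exactly $\dim T$; the dimension formula for flat morphisms \cite[Corollary 9.6]{Har} then forces every irreducible component $Z$ of $U_0$ to have dimension $\dim T+\dim H_0=\dim\X$. A fibre-dimension count now shows that $Z$ dominates $H_0$, hence meets the irreducible open set $\X^s\times^T X_\Sigma$, and therefore coincides with its closure; thus $U_0$ is irreducible. (Equivalently, one may bypass the dimension formula and invoke going-down for the flat map $p_0$ directly: it sends the generic point of any component of $U_0$ to the generic point of $H_0$.) You have the right ingredients listed, but the logical weight in this step must rest on flatness, not on properness.
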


\begin{proof} It is sufficient to show that  $U_0$ is irreducible.
By \cite[Lemma 3.5]{C},  the dimension of any irreducible component of the
fibre $p^{-1}(x)$ equals $\dim T$ for any $x\in H$. Since $p_0$ is flat, this implies that the dimension
of any irreducible component $Z$ of $U_0$ is equal to $\dim \X$ \cite[Corollary 9.6]{Har}.
It follows that  $Z$  dominates $H_0$. Consequently, the intersection $Z\cap (\X^s\times^TX_\Sigma)$
is non-empty and $Z=\overline{Z\cap (\X^s\times^TX_\Sigma)}$.
This implies that $U_0=\overline{\X^s\times^TX_\Sigma}$ is irreducible.
\end{proof}

\subsection{The toric Chow morphism}\label{sec3-3}

First of all, let us recall the construction of the toric Chow morphism $H\to \X/_CT$.
This construction was given in \cite[Section 5]{HS} for the case when $\X$ is a finite-dimensional $T$-module, but it is almost the same in the case of an affine $T$-variety.

For any character $\chi\in \Sigma$, there exists $n>0$ such
that the $\Z$-graded algebra $R_{n\chi}:=k[\X]^{(n\chi)}$ is generated by its
elements of degree one.
 Note that the scheme $X/_{\sigma_{\chi}}T=\Proj R_{n\chi}$ represents the functor $\H^{h}_{\Sp{R_{n\chi}},\, k^{\times}}$, where
$$h(m):= \left\{ \begin{array}{rl}
 1 & \mbox{if}\ \ r\geq 0, \\
 0 & \mbox{otherwise}
\end{array}\right. $$(see \cite[Corollary~3.5]{C}).
The natural morphism of functors $\H\to \H^{h}_{\Sp{R_{n\chi}},\, k^{\times}}$ induces the canonical morphism of the schemes representing this functors $H\to \X/_{\sigma_{\chi}}T$ and it does not depend on the choice of $n$. Moreover, by \cite[Lemma 7.5]{HS}, these morphisms commute with the morphisms of the inverse system, so they define a canonical morphism $\Psi: H\to \X/_CT$.

\begin{lemma} There exists an open embedding $\X^{s}/T\subset (\X/_CT)_0.$ Moreover, we have the following cartesian diagram:
$$
\xymatrix{
\X^s\times^T X_\Sigma \ar[d] \ar@{^{(}->}[r] & W_X \ar[d]^{p_C} \\
\X^{s}/T  \ar@{^{(}->}[r] & (\X/_CT)_0.
}
$$
\end{lemma}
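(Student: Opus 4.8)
The plan is to exhibit the open embedding and the Cartesian square by reusing, almost verbatim, the structure of the proof of Lemma~\ref{l1}, now with $H$ replaced by $(\X/_CT)_0$ and $U$ replaced by $W_X$. The key observation is that $\X^s\subset X^{ss}$, since for every $\chi\in\Sigma$ the locus $\bigcup_{f\in k[\X]_\chi}\X_f$ is contained in $X^{ss}_\chi$ (a point where some $f$ of weight $\chi$ is nonzero is automatically semistable for $\chi$). Hence the canonical map $q: X^{ss}\to(\X/_CT)_0$ restricts to $\X^s$, and because $T$ acts freely on $\X^s$ this restriction factors through the geometric quotient $\pi^s:\X^s\to\X^s/T$ produced in Step~1 of Lemma~\ref{l1}. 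So the first step is to check that the induced map $\X^s/T\to(\X/_CT)_0$ is an open immersion. For this I would argue that on each GIT-quotient $X/_\lambda T$ with $\lambda\cap\ri(\sigma)\ne\emptyset$ the image of $\X^s$ is exactly the free locus, which is open, and that the compatible maps $q_\lambda|_{\X^s}$ identify $\X^s/T$ with an open subset of the inverse limit; since $q(\X^s)$ is a constructible subset of the irreducible variety $(\X/_CT)_0$ containing a dense open set, and since $\pi^s$ is (by Step~1 of Lemma~\ref{l1}) a locally trivial $T$-bundle, descent of smoothness/étale-ness of $q|_{\X^s}$ along $\pi^s$ shows the induced map is étale and injective, hence an open immersion.

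The second step is to identify the preimage $p_C^{-1}(\X^s/T)\subset W_X$ with $\X^s\times^T X_\Sigma$. By definition $W_X$ is the closure of the graph of $q$ over $X^{ss}$, so $p_C^{-1}(\X^s/T)$ contains the graph of $q|_{\X^s}$, which is $T$-equivariantly isomorphic to $\X^s$ fibered over $\X^s/T$; and the fiberwise closure of a free $T$-orbit $Tx$ for $x\in\X^s$ is $\overline{Tx}\simeq X_\Sigma$, so the scheme-theoretic picture over $\X^s/T$ is the associated bundle $\X^s\times^T X_\Sigma$. To make this precise I would use the local trivializations from Lemma~\ref{l1}: over the affine chart $\X_f/T=\Sp k[\X_f]^T$ ($f=f_1\cdots f_r$) the graph of $q$ is cut out inside $\X\times(\X_f/T)$ by the same equations describing $\X^s\times^T X_\Sigma$ inside $\X\times(\X^s/T)$, namely the $T$-invariant relations expressing that the $\X$-coordinate and the base point differ by a torus element, and taking closures is compatible with these trivializations. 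Thus $p_C^{-1}(\X^s/T)=\X^s\times^T X_\Sigma$, and the square is Cartesian because both the top and bottom horizontal maps are open immersions with matching images.

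The main obstacle I expect is the careful verification that the map $\X^s/T\to(\X/_CT)_0$ is genuinely an \emph{open} immersion rather than merely a dominant map with dense image: one must rule out that closure in the inverse limit $X/_CT$ introduces extra identifications or boundary points meeting the image of $\X^s$. This is exactly the kind of point handled in Lemma~\ref{l1} Step~3 via the openness of the free/irreducible-fiber locus and \cite[Theorem~12.1.1]{EGA}; here the analogue is that the free locus of the good quotient $p_C:W_X\to(\X/_CT)_0$ is open, its image downstairs is open, and that image is precisely $\X^s/T$. Once this openness statement is in hand, the identification of the universal families over it and the Cartesian property are formal consequences of the constructions, so no serious computation remains.
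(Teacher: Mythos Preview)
Your overall architecture is reasonable, but the argument for the open immersion $\X^s/T\hookrightarrow(\X/_CT)_0$ is where the proposal is genuinely incomplete, and the paper handles this point by a much more elementary route that you are missing.

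You propose two mechanisms for openness: (i) that the image of $\X^s$ in each $X/_\lambda T$ is ``exactly the free locus'', and (ii) descent of \'etaleness of $q|_{\X^s}$ along the bundle $\pi^s$. Neither is solid as stated. For (i), there is no reason the free locus of the $T$-action on $X^{ss}_\lambda$ should coincide with $\X^s$; freeness of the stabilizer at $x$ does not force every weight $\chi\in\Sigma$ to be realized by a function nonvanishing at $x$. For (ii), to descend \'etaleness you would first need $q|_{\X^s}:\X^s\to(\X/_CT)_0$ to be smooth of relative dimension $\dim T$, which is essentially what you are trying to prove. Your fallback, imitating Step~3 of Lemma~\ref{l1} via openness of the irreducible-fiber locus of $p_C$, presupposes that $p_C$ is flat; but $p_C$ is only known to be a good quotient, and good quotients need not be flat, so \cite[Theorem~12.1.1]{EGA} does not apply directly.

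The paper sidesteps all of this with a one-line computation. Since the generators $\chi_1,\dots,\chi_r$ of $\Sigma$ span the weight cone $\omega$, for any $\lambda\in\cQ$ meeting $\ri(\omega)$ one can choose positive integers $c_i$ with $\sum c_i\chi_i\in\ri(\lambda)$; then $f:=f_1^{c_1}\cdots f_r^{c_r}\in k[\X]_\chi$ and $\X_{f_1\cdots f_r}=\X_f$. This exhibits each basic affine chart of $\X^s$ as a principal open $\X_f\subset X^{ss}_\lambda$ with $\X_f=q_\lambda^{-1}(\X_f/T)$, so $\X^s/T$ is open in \emph{every} $X/_\lambda T$ and $\X^s$ is saturated over it. Passing to the inverse limit gives the open embedding into $\X/_CT$ and the Cartesian square with $\X^s\hookrightarrow X^{ss}$ directly. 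The identification $p_C^{-1}(\X^s/T)=\X^s\times^T X_\Sigma$ then follows by intersecting $W_\X$ with $\X\times(\X^s/T)$, since $\X^s\times^T X_\Sigma$ is already closed there. No \'etale descent, no flatness of $p_C$, and no analysis of free loci is needed.
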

\begin{proof}
Note that in the inverse limit of GIT-quotients $\X/_{\!\!\lambda} T$, $\lambda\in \cQ$, it is sufficient to take the limit over $\lambda$ intersecting $\text{relint}(\omega)$. Moreover, for any such $\lambda$ the quotient $\X^s/T$ is an open subscheme in $X^{ss}_\lambda/\!/T$ and $X^s=q_\lambda^{-1}(\X^s/T)\subset X^{ss}_\lambda$.
Indeed, $\X^s$ is covered by the open affine $T$-stable subschemes $\X_{f_1\cdot\ldots\cdot f_r}$  (with the notation of the proof of Lemma~\ref{l1}). There exist $c_i>0$ such that $\sum c_i\chi_i=\chi$ for some $n>0$ and $\chi\in\ri{\lambda}$. Then $f=f_1^{c_1}\cdot\ldots\cdot f_r^{c_r}\in k[\X]_{\chi}$, and $\X_{f_1\cdot\ldots\cdot f_r}=\X_f=q_\lambda^{-1}(\X_f/T)\subset X^{ss}_\lambda$.

This implies that we have a Cartesian diagram
$$
\xymatrix{
\X^s \ar[d] \ar@{^{(}->}[r] & \X^{ss} \ar[d] \\
\X^{s}/T  \ar@{^{(}->}[r] & \X/_CT.
}
$$
This means that $\X^s=\X^{ss}\cap (\X\times \X^s/T)\subset \X\times \X/_CT.$
Note also that $\X^s\times^T X_\Sigma=\overline{\X^s}\subset \X\times \X^s/T$.  Finally, this implies that $\X^s\times^T X_\Sigma=W_\X \cap (\X\times \X^s/T)\subset \X\times \X/_CT$.

\end{proof}

In the following theorem we show that the toric Chow morphism $\Psi$ restricted on the main component lifts to a birational projective morphism $U^0\to W_X$.

\begin{theorem}\label{remw}
We have the following commutative diagram:

$$
\xymatrix{
         U_0 \ar[rr]^{\Phi}\ar[rd]_{p^0_X}\ar[ddd]_{p_H}  & & W_\X \ar[ddd]^{p_C}\ar[ld]^{p^W_{\X}} \\
          & \X \ar[ddd]_(.40){\pi} \\
          \\
          H_0 \ar[rr]_(.60){\Psi^0}\ar[rd]_{p^0_\X/\!/T} & & (\X/_C T)_0 \ar[dl]^{p^W_{\X}/\!/T} \\
          & \X/\!/T,
}
$$
where  $\Psi^0$ is the restriction  of $\Psi$ on
$H_0$ and $\Phi$ is the restriction of the morphism
$\Psi^0\times {\rm Id}_\X$ on $U_0\subset  H_0\times \X$. The morphisms $\Psi^0$, $\Phi$, $p^W_{\X}$ and $p^W_{\X}/\!/T$
are projective ant the first three of them are, moreover, birational.
\end{theorem}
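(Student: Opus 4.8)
The plan is to reduce every assertion to the behaviour of the morphisms over the dense open locus $\X^s/T$, where Lemma~\ref{l1} and the lemma preceding the theorem identify the families involved, supplemented by two relative-projectivity arguments of the kind already used in Lemma~\ref{l0}. Throughout we may assume $\X^s\ne\emptyset$, since otherwise $H_0$ is empty and there is nothing to prove. I would first check that $\Phi$ is well defined and that the cube commutes. Its back face (with $\pi$ and $p^W_\X/\!/T$) and its two side faces are the commutative squares already recorded in Section~\ref{sec3-1} and in Lemma~\ref{l0}. By Lemma~\ref{l1} and the lemma preceding the theorem there are canonical dense open immersions $\X^s/T\hookrightarrow H_0$ and $\X^s/T\hookrightarrow(\X/_CT)_0$; the family $\X^s\times^TX_\Sigma$ is simultaneously the restriction of $p_H:U_0\to H_0$ and of $p_C:W_\X\to(\X/_CT)_0$ over $\X^s/T$; and by construction of $\Psi$ it restricts to the identity of $\X^s/T$ while $\Psi\times\mathrm{Id}_\X$ restricts to the identity of $\X^s\times^TX_\Sigma$. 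Since $U_0=\overline{\X^s\times^TX_\Sigma}$ and $W_\X$ is closed in $\X\times(\X/_CT)_0$, continuity gives $(\Psi^0\times\mathrm{Id}_\X)(U_0)\subseteq W_\X$, so $\Phi$ is well defined; and because the sources are reduced and irreducible while the targets $\X/\!/T$, $\X$ and $(\X/_CT)_0$ are separated, the identities $p^W_\X/\!/T\circ\Psi^0=p^0_\X/\!/T$, $p^W_\X\circ\Phi=p^0_\X$ and $p_C\circ\Phi=\Psi^0\circ p_H$ follow from the fact that they hold over the dense open $\X^s/T$.

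Birationality is then read off from the same open locus. The morphisms $\Psi^0$ and $\Phi$ restrict to the identity maps on the dense open subschemes $\X^s/T\subseteq(\X/_CT)_0$ and $\X^s\times^TX_\Sigma\subseteq W_\X$, hence are birational. For $p^W_\X$, observe that the graph $\{(x,q(x)):x\in\X^{ss}\}$ is closed in $\X^{ss}\times(\X/_CT)_0$ (because $q$ is a morphism on $\X^{ss}$ and $(\X/_CT)_0$ is separated) and is dense in $W_\X$; being simultaneously closed and dense in $(p^W_\X)^{-1}(\X^{ss})$, it equals that preimage, and on it $p^W_\X$ is inverse to the isomorphism $x\mapsto(x,q(x))$. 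Since $\X^{ss}$ is a non-empty, hence dense, open subset of the irreducible variety $\X$, the morphism $p^W_\X$ is birational.

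For projectivity I would argue from the outside in. The GIT-limit $\X/_CT$ is the limit of a finite inverse system of schemes of the form $\Proj k[\X]^{(\chi)}$, each projective over the affine variety $\X/\!/T=\Sp k[\X]^T$; hence $\X/_CT$ is projective over $\X/\!/T$, and, being closed in it, so is $(\X/_CT)_0$; thus $p^W_\X/\!/T$ is projective. The commutative square relating $p^W_\X$ and $\pi$ then gives, $\X/\!/T$ being separated, a closed immersion $W_\X\hookrightarrow\X\times_{\X/\!/T}(\X/_CT)_0$ whose target is the base change of the projective morphism $(\X/_CT)_0\to\X/\!/T$; hence $p^W_\X$ is projective, which is precisely the argument of Lemma~\ref{l0} applied to $p_\X$. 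Finally, by Lemma~\ref{l0} and restriction to the closed subschemes $H_0\subseteq H$ and $U_0\subseteq U$, the structure maps $p^0_\X/\!/T:H_0\to\X/\!/T$ and $p^0_\X:U_0\to\X$ are projective. Then $\Psi^0$ is a morphism over $\X/\!/T$ with $H_0$ projective over $\X/\!/T$ and $(\X/_CT)_0$ separated over $\X/\!/T$, hence proper; a $p^0_\X/\!/T$-ample invertible sheaf on $H_0$ is automatically $\Psi^0$-ample, so $\Psi^0$ is projective. Likewise $\Phi$ is a morphism over $\X$ with $U_0$ projective over $\X$ and $W_\X$ separated over $\X$, hence proper, and a $p^0_\X$-ample sheaf on $U_0$ is $\Phi$-ample, so $\Phi$ is projective.

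The step I expect to be the main obstacle is the first one: one has to match carefully the two incarnations of the universal family over $\X^s/T$ — the one coming from the toric Hilbert scheme in Lemma~\ref{l1} and the one coming from the graph of $q$ — in order both to see that $\Phi$ takes values in $W_\X$ and to pin down the commutativity of the whole cube. Once that identification is in place, birationality follows by restriction to the dense open locus, and the projectivity statements are formal consequences of Lemma~\ref{l0} and the description of $\X/_CT$ as a limit of projective $\X/\!/T$-schemes.
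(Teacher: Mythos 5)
Your proposal is correct and follows essentially the same route as the paper's proof: birationality of $\Psi^0$ and $\Phi$ is read off from their restriction to the dense open locus $\X^s/T$ (resp.\ $\X^s\times^TX_\Sigma$), and projectivity follows from Lemma~\ref{l0} together with the factorizations $p^0_\X=p^W_\X\circ\Phi$ and $p^0_\X/\!/T=p^W_\X/\!/T\circ\Psi^0$ via the standard cancellation property for projective morphisms over a separated target. You merely spell out the parts the paper dismisses as obvious (well-definedness of $\Phi$, birationality of $p^W_\X$ via the graph of $q$, and projectivity of $p^W_\X$ and $p^W_\X/\!/T$), and those verifications are all sound.
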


\begin{proof}
The restriction of $\Psi^0$ on $\X^s/T\subset H_0$
is the identity map on $\X^s/T\subset (\X/_CT)_0$, thus
$\Psi^0$ is a birational morphism. Also, the restriction of
$\Psi^0\times {\rm Id}_\X$ on $\X^s\subset U_0$ is the
identity map on $\X^s\subset W_{\X,T}$, so $\Psi^0\times
{\rm Id}_\X$ maps $U_0$ birationally  on $W_\X$. By
construction of the morphisms $\Psi$ and $\Phi$, it
follows that $p^0_\X=p^W_{\X}\circ \Phi$ and $p^0_\X/\!/T
= p^W_{\X}/\!/T \circ \Psi^0$. Further,  by Lemma \ref{l0},
the morphisms $p^0_\X$ and $p^0_\X/\!/T$ are projective. It
follows that $\Phi$ and $\Psi^0$ are projective. The other assertions are obvious.
\end{proof}

\begin{corollary}\label{cor}
There exists a canonical birational projective morphism $U_0^{\norm}\to \widetilde{X}$.
\end{corollary}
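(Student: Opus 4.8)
The plan is to obtain the required map as the morphism induced on normalizations by the canonical morphism $\Phi\colon U_0\to W_X$ of Theorem~\ref{remw}, and then to identify the target via Proposition~\ref{main}. Concretely, let $\nu\colon U_0^{\norm}\to U_0$ be the normalization morphism. Since $U_0$ is an irreducible variety (which need not itself be normal) and $\Phi$ is birational, the composite $\Phi\circ\nu\colon U_0^{\norm}\to W_X$ is a dominant morphism whose source is normal and integral; by the universal property of normalization it factors uniquely through the normalization $W_X^{\norm}\to W_X$. By Proposition~\ref{main} there is an isomorphism $W_X^{\norm}\cong\widetilde{X}$ over $(\X/_C T)_0^{\norm}=Y$, so composing we obtain a morphism $\widetilde{\Phi}\colon U_0^{\norm}\to\widetilde{X}$. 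It is canonical, being the unique such factorization and being built from $\Phi$ and the normalization morphisms, all of which are canonical.

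Birationality of $\widetilde{\Phi}$ is then immediate: $\Phi\circ\nu$ is birational as a composite of birational morphisms, $W_X^{\norm}\to W_X$ is birational as a normalization of an irreducible variety, and $\widetilde{\Phi}$ is the morphism induced over $W_X$, hence an isomorphism over a dense open subset. For projectivity I would argue as follows. The morphism $\widetilde{X}\cong W_X^{\norm}\to W_X$ is finite, so the base change $U_0\times_{W_X}\widetilde{X}\to\widetilde{X}$ of the projective morphism $\Phi$ is again projective. The morphisms $\nu$ and $\widetilde{\Phi}$ agree over $W_X$ and hence define $g\colon U_0^{\norm}\to U_0\times_{W_X}\widetilde{X}$ with $\widetilde{\Phi}=\mathrm{pr}_{\widetilde{X}}\circ g$. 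One checks that $g$ is finite: composing it with the finite (hence separated) projection $U_0\times_{W_X}\widetilde{X}\to U_0$ recovers $\nu$, which is finite and hence proper, so $g$ is proper by the cancellation property, and $g$ is quasi-finite since each of its fibres lies inside a fibre of $\nu$; a proper quasi-finite morphism is finite. Being finite, $g$ is projective, and therefore so is $\widetilde{\Phi}=\mathrm{pr}_{\widetilde{X}}\circ g$ as a composition of projective morphisms.

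The only nontrivial point is this last projectivity verification; the construction of the morphism and its birationality are formal consequences of Proposition~\ref{main}, Theorem~\ref{remw} and the universal property of normalization. One could alternatively invoke directly the functoriality of normalization for projective morphisms, but the base-change and cancellation argument above keeps the proof self-contained.
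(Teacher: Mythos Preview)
Your proof is correct and is precisely the argument the paper has in mind: the corollary is stated without proof immediately after Theorem~\ref{remw}, and the intended deduction is exactly to pass to normalizations in the birational projective morphism $\Phi\colon U_0\to W_X$ and then invoke Proposition~\ref{main} to identify $W_X^{\norm}$ with $\widetilde{X}$. Your careful verification of projectivity via the fibre-product and cancellation argument is a clean way to fill in the only detail that is not entirely formal.
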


\section{Subtorus actions on affine toric varieties}\label{sec4}

In this section $X=X_{\sigma}$ is a normal affine toric variety under the torus $\T$, $\sigma\subset \Lambda(\T)_{\Q}$ is a corresponding cone. We consider the action on $X$ of the subtorus $T\subset \T$. The natural homomorphism $\T\rightarrow \T/T$ induces the surjective map of vector spaces that are generated by the corresponding lattices of one-parameter subgroups $\alpha: \Lambda(\T)_{\Q}\rightarrow \Lambda(\T/T)_{\Q}$. The embedding of torus $T\hookrightarrow \T$ induces the surjective map of vector spaces $\beta: \mathfrak{X}(\T)_{\Q}\rightarrow \mathfrak{X}(T)_{\Q}$ such that $\beta(\sigma^{\vee})=\omega$, where $\omega\subset \mathfrak{X}(T)_{\Q}$ is the weight cone.

\subsection{The fan representation of the Altmann-Hausen family}

Let $\psi: \widetilde{X}\rightarrow Y$ be the Altmann-Hausen family of the $T$-action on $X$, and let $\varphi: \widetilde{X}\rightarrow X$ be the canonical morphism. We use the notations of Section \ref{cah} related to the construction of $\psi$.

Let $\T x$ be the open orbit of the $\T$-action on $X$. It can be easily seen that $X^{ss}\supset \T x$. Denote by $x_{\lambda}$, $x_C$ and $y$ the images of $x$ in $X/_{\lambda}T$, $(X/_C T)_0$ and $Y$ respectively. The quotient maps $q_{\lambda}$ induce actions of the torus $\T$ on the GIT-quotients $X/_{\lambda}T$. So $X/_{\lambda}T=\overline{\T x_{\lambda}}$ is the toric variety under the torus $\T/\T_{x_{\lambda}}$, where $\T_{x_{\lambda}}$ is the stabilizer. If $\lambda$ lies in the interior of $\omega$, general fibers of the morphism $q_{\lambda}$ contain a unique dense $T$-orbit, and consequently $\T_{x_{\lambda}}=T$. Note that the morphisms $p_{\lambda_1\lambda_2}$ in the GIT-system are $\T$-equivariant, so $Y=\overline{\T y}$ is a normal toric variety under the torus $\T/T$ (see \cite[Proposition~3.8]{CM}), and the morphisms $p_{\lambda}$ are $\T$-equivariant. Also $W_X=\overline{\T (x, x_C)}\subset X \times (X/_C T)_0$ is a toric (not necessarily normal) variety with the torus $\T$.

\begin{lemma}\label{ccr}
The fan $\mathcal{C}_{W_X}$ of the toric variety $W_X=\overline{\T (x, x_C)}\subset X\times (X/_C T)_0$ is the coarsest common refinement of the cone $\sigma$ and the quasifan $\alpha^{-1}(\mathcal{C}_{(X/_C T)_0})$.
\end{lemma}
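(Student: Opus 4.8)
The plan is to compute the fan of $W_X$ directly from the limit criterion recalled in Section~\ref{sec2}, exactly as in the proof of Proposition~\ref{per}, and then identify the resulting fan with the asserted coarsest common refinement. Recall that $W_X=\overline{\T(x,x_C)}\subset X\times(X/_CT)_0$ is a toric variety under $\T$, so its fan $\mathcal{C}_{W_X}$ (a quasifan in $\Lambda(\T)_\Q$, since the $\T$-action need not be faithful) is determined by the behaviour of one-parameter subgroups of $\T$ at the point $(x,x_C)$. For $\lambda\in\Lambda(\T)$ the limit $\lim_{s\to 0}\lambda(s)(x,x_C)$ exists if and only if both $\lim_{s\to 0}\lambda(s)x$ exists in $X$ and $\lim_{s\to 0}\lambda(s)x_C$ exists in $(X/_CT)_0$; and two such subgroups $\lambda_1,\lambda_2$ yield the same limit if and only if their limits agree on both factors. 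The first condition, for the factor $X=X_\sigma$, says exactly that $\lambda\in\sigma$, and $\lambda_1,\lambda_2$ give the same limit in $X$ iff they lie in the relative interior of a common face of $\sigma$. So the whole problem reduces to translating the two conditions coming from the factor $(X/_CT)_0$ into the language of the quasifan $\alpha^{-1}(\mathcal{C}_{(X/_CT)_0})$.

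The key observation here is that the quotient map $X^{ss}\to(X/_CT)_0$ is $\T$-equivariant and sends $x$ to $x_C$, where the $\T$-action on $(X/_CT)_0$ factors through $\T/T$; thus $\lim_{s\to 0}\lambda(s)x_C$ depends only on $\bar\lambda:=\alpha(\lambda)\in\Lambda(\T/T)_\Q$, and the set of $\lambda$ for which this limit exists is precisely $\alpha^{-1}(|\mathcal{C}_{(X/_CT)_0}|)$, while two subgroups give the same limit in $(X/_CT)_0$ iff $\alpha(\lambda_1)$ and $\alpha(\lambda_2)$ lie in the relative interior of a common cone of $\mathcal{C}_{(X/_CT)_0}$, i.e.\ iff $\lambda_1,\lambda_2$ lie in the relative interior of a common cone of the quasifan $\alpha^{-1}(\mathcal{C}_{(X/_CT)_0})$. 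Here I would invoke the fact, recorded in Section~\ref{cah}, that the morphisms $p_\lambda\colon Y\to X/_\lambda T$ are projective with connected fibres and birational when $\lambda\cap\ri(\sigma)\ne\emptyset$, so that the fan of $(X/_CT)_0$ genuinely records the orbit-limit behaviour of $x_C$ (this also uses that $(X/_CT)_0$ is covered by $\T$-invariant affine charts, guaranteeing that its fan is defined and governs limits in the usual way, as stated in Section~\ref{sec2}). Combining the two factors: $\lambda$ is in $|\mathcal{C}_{W_X}|$ iff $\lambda\in\sigma\cap\alpha^{-1}(|\mathcal{C}_{(X/_CT)_0}|)$, and $\lambda_1\sim\lambda_2$ iff they lie in the relative interior of a common cone of $\sigma$ \emph{and} of a common cone of $\alpha^{-1}(\mathcal{C}_{(X/_CT)_0})$. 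By the definition of coarsest common refinement, the cones of $\mathcal{C}_{W_X}$ are exactly the nonempty intersections $\tau\cap\alpha^{-1}(\varsigma)$ with $\tau$ a face of $\sigma$ and $\varsigma\in\mathcal{C}_{(X/_CT)_0}$, which is the assertion.

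The main obstacle I expect is the second, more delicate half of the equivalence: showing that the limit of $x_C$ along $\lambda$ in $(X/_CT)_0$ is faithfully recorded by which cone of $\mathcal{C}_{(X/_CT)_0}$ contains $\alpha(\lambda)$. This is clear when $(X/_CT)_0$ is normal, by the standard toric orbit--cone correspondence quoted in Section~\ref{sec2}; in general $(X/_CT)_0$ may fail to be normal, but since it admits an open covering by $\T$-invariant affine charts and ``a toric variety is determined by its fan up to normalization,'' the orbit structure and hence the limit behaviour of one-parameter subgroups is the same as for its normalization $Y$, for which the statement holds. One should also check that the image $x_C$ of the generic point $x$ of $X$ really has dense $\T$-orbit in $(X/_CT)_0$—this is exactly the content of $\T_{x_\lambda}=T$ recorded just before the lemma, combined with $(X/_CT)_0=\overline{\T x_C}$. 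Granting these points, the argument is a direct transcription of the proof of Proposition~\ref{per} with the second factor $Y$ replaced by $(X/_CT)_0$ and the limit condition on that factor rewritten via $\alpha$.
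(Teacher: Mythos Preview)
Your proposal is correct and follows essentially the same route as the paper: both argue via the limit criterion underlying Proposition~\ref{per}, after noting that $(X/_CT)_0$ (and hence $W_X$) admits a $\T$-invariant affine covering so that its fan is defined and governs limits. The paper's proof is terser---it simply observes the covering and then invokes Proposition~\ref{per} directly, using the remark after that proposition that the fan of the second factor, when the action is not faithful, is to be read as a quasifan in $\Lambda(\T)_\Q$, which is exactly your $\alpha^{-1}(\mathcal{C}_{(X/_CT)_0})$; you unpack this identification and the non-normality issue explicitly, but the substance is the same. (Your appeal to the projectivity and birationality of $p_\lambda$ is not actually needed here.)
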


\begin{proof}
  Note that we can consider a fan of the toric variety $W_X$ (which is not necessarily normal), because there exists a $\T$-invariant affine covering $W_X=X \times U_i$, where $(X/_C T)_0=\bigcup U_i$ is an affine $\T$-invariant covering of the variety $(X/_C T)_0$. The existence of the last one immediately follows from the existence of a $\T$-equivariant closed embedding of variety $(X/_C T)_0$ in the direct product of GIT-factors.

  Lemma follows now from Proposition \ref{per} and the definition of the variety $W_X$.
\end{proof}

Let $\cQ\subset \mathfrak{X}(T)_{\Q}$ be the GIT-fan. Denote by~$\cQ^0$ the set of $\lambda\in\cQ$ satisfying the condition $\lambda\cap \text{relint}(\omega)\ne \emptyset$.
We have already seen that for $\lambda\in\cQ^0$ the corresponding GIT-quotients $X/_{\lambda}T$ are toric varieties under the torus $\T/T$. The variety $Y$ is also toric under the torus $\T/T$. The construction of the fans of these varieties is described in \cite{CM}. In the framework of that paper a more general case of a semiprojective toric variety $X$ corresponding to the normal fan of some polyhedra $P\subset \mathfrak{X}(\T)_{\Q}$. The case of an affine toric variety $X=X_{\sigma}$ considered in \cite[Section~5]{C} (in this case $P=\sigma^{\vee}$). We shall continue our exposition following \cite{C}.

For all $\chi\in \omega$, denote $P_{\chi}=\beta^{-1}(\chi)\cap\sigma^{\vee}$. Let $\lambda\in \cQ^0$. Then, for all $\chi\in\text{relint}(\lambda)$, the polyhedra $P_{\chi}$ have the same normal fan $\mathcal{N}_{\lambda}\subset \Lambda(\T/T)$ (see \cite[Remark~4.12]{C}) which coincides with the fan of the toric variety $X/_{\lambda}T$. The fan of the toric variety $Y$ is the coarsest common refinement of all the fans $\mathcal{N}_{\lambda}$, where $\lambda\in\cQ^0$, so it is the normal fan of the Minkowski sum $$\sum_{\lambda\in\Lambda^0} P_{\chi_{\lambda}},$$ where $\chi_{\lambda}\in\text{relint}(\lambda)$.

 Having the description of the fan $\mathcal{C}_Y=\mathcal{C}_{(X/_C T)_0}$, the following result allows us to describe the Altmann-Hausen family in the toric case.

\begin{theorem}\label{veer}
Let $X=X_{\sigma}$ be the toric variety with a big torus $\T$, and let $T\subset \T$ be a subtorus. Then the variety $\widetilde{X}$ constructed for the action of $T$ on $X$ is a normal toric variety with the torus $\T$. Its fan $\mathcal{C}_{\widetilde{X}}\subset \Lambda(\T)$ is the coarsest common refinement of the quasifan $\alpha^{-1}(\mathcal{C}_{(X/_C T)_0})$ and the cone $\sigma$, where $\alpha: \Lambda(\T)\rightarrow \Lambda(\T/T)$ is the natural map.
\end{theorem}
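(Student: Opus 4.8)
The plan is to read the statement off from Proposition~\ref{main} together with Lemma~\ref{ccr}, using the general correspondence between toric varieties and fans recalled in Section~\ref{sec2}. First I would recall that, by Proposition~\ref{main}, the Altmann--Hausen variety $\widetilde X$ is the normalization $W_X^{\norm}$ of $W_X$. As observed just before Lemma~\ref{ccr}, the $\T$-equivariant embedding of $(X/_CT)_0$ into the product of GIT-quotients supplies a $\T$-invariant affine open covering of $(X/_CT)_0$, hence one of $W_X=\overline{\T(x,x_C)}\subset X\times(X/_CT)_0$; since $\T$ acts faithfully on $X$ it acts faithfully on $W_X$, so $W_X$ is a (possibly non-normal) toric variety under $\T$ with a well-defined fan $\mathcal C_{W_X}\subset\Lambda(\T)_\Q$. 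Note also that $\dim W_X=\dim X$, since $W_X$ is the closure of the graph of $q$ over the dense open $X^{ss}\subset X$.

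Next I would invoke the fact, recalled in Section~\ref{sec2}, that a toric variety is determined by its fan up to normalization: the normalization $W_X^{\norm}$ is the normal toric variety associated to the fan $\mathcal C_{W_X}$. Thus $\widetilde X\cong W_X^{\norm}$ is a normal toric variety, its big torus is $\T$ because $\dim W_X^{\norm}=\dim W_X=\dim X=\dim\T$, and $\mathcal C_{\widetilde X}=\mathcal C_{W_X}$. It then remains only to identify $\mathcal C_{W_X}$, which is exactly the content of Lemma~\ref{ccr}: it equals the coarsest common refinement of the cone $\sigma$ (the fan of $X=X_\sigma$) and the quasifan $\alpha^{-1}(\mathcal C_{(X/_CT)_0})$. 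Combining the three observations gives the theorem.

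The argument has no genuinely difficult step once Proposition~\ref{main} and Lemma~\ref{ccr} are in hand; the only points meriting a sentence each are: (i) that the $\T$-action lifts to the normalization and the normalization of a $\T$-toric variety is again $\T$-toric with the same fan, which is part of the standard toric dictionary; and (ii) that the coarsest common refinement appearing in Lemma~\ref{ccr} is an honest (quasi)fan carrying a normal toric variety. The latter is immediate because each of its cones is of the shape $\tau\cap\alpha^{-1}(\rho)$ with $\tau\preceq\sigma$ and $\rho\in\mathcal C_{(X/_CT)_0}$, hence is contained in $\sigma$; the collection is therefore closed under taking faces and pairwise intersections, and its support equals $\sigma\cap\alpha^{-1}(|\mathcal C_{(X/_CT)_0}|)$, which is $\sigma$ because the proper toric morphism $\widetilde X\to X$ of Proposition~\ref{ahm} forces $|\mathcal C_{\widetilde X}|=\sigma$ by Proposition~\ref{prop}. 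So the main (modest) obstacle is purely bookkeeping: making sure the passage $W_X\rightsquigarrow W_X^{\norm}$ does not disturb the fan and that the refinement in Lemma~\ref{ccr} is well posed.
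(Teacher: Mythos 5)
Your argument is correct and follows essentially the same route as the paper's primary proof: invoke Proposition~\ref{main} to identify $\widetilde{X}$ with $W_X^{\text{norm}}$, note that normalization preserves the fan, and conclude via Lemma~\ref{ccr}. The extra checks you include (that $W_X$ is toric under $\T$ and that the refinement is a well-posed quasifan with support $\sigma$) are consistent with remarks the paper makes before Lemma~\ref{ccr} and in its alternative, purely toric proof.
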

\begin{proof}By Proposition \ref{main}, the toric variety $\widetilde{X}$ is isomorphic to the normalization of the variety $W_X$, so their fans $\mathcal{C}_{\widetilde{X}}$ and $\mathcal{C}_{W_X}$ coincide. Now our statement follows directly from Lemma \ref{ccr}.
\end{proof}

The fact that the fans $\mathcal{C}_{\widetilde{X}}$ and $\mathcal{C}_{W_X}$ coincide (and so Theorem \ref{veer}) could be proved by methods of toric geometry without using Proposition \ref{main}. We shall give this proof below. Moreover, one can prove Proposition \ref{main} in the toric case using Theorem \ref{veer} and then pass on to general case.

\begin{proof}[Another proof of Theorem \ref{veer}]
The morphism $\varphi: \widetilde{X}\rightarrow X$ is proper, so, by Proposition \ref{prop}, the support of the fan $C_{\widetilde{X}}$ coincides with $\sigma$. The morphism $\psi: \widetilde{X}\rightarrow Y$ is toric, hence, by Proposition \ref{tormorph}, the image $\alpha_{\Q}(\tau)$ of any cone $\tau\in \mathcal{C}_{\widetilde{X}}$ is contained in some cone $\delta \in \mathcal{C}_Y$. Also the morphism $\psi$ is a good quotient, so, by \cite[Theorem~4.1]{S}, we have that $\delta=\alpha(\tau)$ and $\tau=\alpha^{-1}(\delta)\cap \sigma$, which exactly means that the fan $C_{\widetilde{X}}$ is the coarsest common refinement of the quasifan $\alpha^{-1}(\mathcal{C}_{(X/_C T)_0})$ and the cone $\sigma$. By Lemma \ref{ccr}, it follows that the fans $\mathcal{C}_{W_X}$ and $C_{\widetilde{X}}$ coincide.
\end{proof}

\begin{example}\label{ahex}
Let us consider the action of the one-dimensional torus $T$ on the four-dimensional affine space $X=\A^4$ given by $t\cdot (x_1,x_2,x_3,x_4)=(tx_1,tx_2,t^{-1}x_3,t^{-1}x_4)$. Our main purpose is to construct the fan $\C_{\widetilde{X}}$ of the toric variety $\widetilde{X}$. Along the way, we will construct the fans of the toric varieties $X/_{\lambda}T$ and $Y$, where $\lambda \in \cQ^0$.

The GIT-fan $\cQ$ consists of the three cones $\lambda_{-1}$, $\lambda_0$, and $\lambda_1$:
\begin{center}
\begin{picture}(90,20)
\linethickness{0.34mm}
\put(0,10){\line(1,0){90}}
\put(45,10){\circle*{3}}
\put(42,0){$0$}
\put(63,0){$1$}
\put(17,0){$-1$}
\put(15,13){$\lambda_{-1}$}
\put(42,13){$\lambda_0$}
\put(68,13){$\lambda_1$}
\end{picture}
\end{center}

We will consider the torus $T$ as the subtorus of the four-dimensional torus $\T$ that acts on $X$ by the rescaling of coordinates. Then $X=X_{\sigma}$, where $\sigma\subset\Lambda (\T)_{\Q}$ is the cone generated by the standard basis $e_1$, $e_2$, $e_3$, $e_4$ of $\Lambda (\T)_{\Q}$.

There is a natural immersion $\Lambda(T)_{\Q}\hookrightarrow \Lambda(\T)_{\Q}$. We will identify the space $\Lambda(\T/T)_{\Q}$ with the orthogonal complement of $\Lambda(T)_{\Q}$ in the space $\Lambda(\T)_{\Q}$. Then the natural map $\alpha: \Lambda(\T)_{\Q}\rightarrow \Lambda(\T/T)_{\Q}$ will be the orthogonal projection.

In the coordinates of the standard basis of $\Lambda(\T)_{\Q}$ denote $\rho_1=(3,-1,1,1)$, $\rho_2=(1,1,-1,3)$, $\rho_3=(-1,3,1,1)$, $\rho_4=(1,1,3,-1)$ and $\rho=(1,1,1,1)$. The fans of the toric varieties $X/_{\lambda_0}T$, $X/_{\lambda_{-1}}T$ and $X/_{\lambda_1}T$ are the normal fans of the polyhedra $P_0$, $P_{-1}$ and $P_1$ respectively. After some calculations one can verify that the fan $\C_{X/_{\lambda_0}T}=\mathcal{N}_{\lambda_0}$ consists of one maximal cone $$\langle\rho_1,\rho_2,\rho_3,\rho_4\rangle_{\Q_{\geq 0}},$$ the fan $\mathcal{C}_{X/_{\lambda_{-1}}T}=\mathcal{N}_{\lambda_{-1}}$ consists of two maximal cones
$$\langle \rho_1,\rho_2,\rho_3\rangle_{\Q_{\geq 0}}\text{ and }\langle \rho_3,\rho_4,\rho_1\rangle_{\Q_{\geq 0}}$$ and the fan $\mathcal{C}_{X/_{\lambda_1}T}=\mathcal{N}_{\lambda_1}$ consists of two maximal cones
$$\langle \rho_4,\rho_1,\rho_2\rangle_{\Q_{\geq 0}}\text{ and }\langle \rho_2,\rho_3,\rho_4\rangle_{\Q_{\geq 0}}.$$ The fan $\mathcal{C}_Y$, as the coarsest common refinement of $\C_{X/_{\lambda_0}T}$, $\C_{X/_{\lambda_{-1}}T}$ and $\C_{X/_{\lambda_1}T}$, consists of four maximal cones
$$\langle \rho_1,\rho_2,\rho\rangle_{\Q_{\geq 0}},\, \langle\rho_2,\rho_3,\rho\rangle_{\Q_{\geq 0}},\, \langle\rho_3,\rho_4,\rho\rangle_{\Q_{\geq 0}},\,\text{and }\langle\rho_4,\rho_1,\rho\rangle_{\Q_{\geq 0}}.$$

\medskip

\begin{center}
\begin{picture}(200,160)
\put(62,160){\textbf{GIT-system}}
\put(90,20){\line(0,1){20}}
\put(90,20){\line(1,0){20}}
\put(110,40){\line(0,-1){20}}
\put(110,40){\line(-1,0){20}}
\put(80,0){\line(1,2){10}}
\put(80,0){\line(1,4){10}}
\put(80,0){\line(3,2){30}}
\small{\put(100,6){$\mathcal{C}_{X/_{\lambda_0}T}$}
\put(-2,78){$\mathcal{C}_{X/_{\lambda_{-1}}T}$}
\put(162,78){$\mathcal{C}_{X/_{\lambda_1}T}$}
\put(112,128){$\mathcal{C}_Y$}
}

\put(40,70){\line(0,1){20}}
\put(40,70){\line(1,0){20}}
\put(40,90){\line(1,-1){20}}
\put(60,90){\line(0,-1){20}}
\put(60,90){\line(-1,0){20}}
\put(30,50){\line(1,2){10}}
\put(30,50){\line(1,4){10}}
\put(30,50){\line(3,2){30}}

\put(140,70){\line(0,1){20}}
\put(140,70){\line(1,0){20}}
\put(140,70){\line(1,1){20}}
\put(160,90){\line(0,-1){20}}
\put(160,90){\line(-1,0){20}}
\put(130,50){\line(1,2){10}}
\put(130,50){\line(1,4){10}}
\put(130,50){\line(3,2){30}}

\put(90,120){\line(0,1){20}}
\put(90,120){\line(1,0){20}}
\put(90,120){\line(1,1){20}}
\put(90,140){\line(1,-1){20}}
\put(110,140){\line(0,-1){20}}
\put(110,140){\line(-1,0){20}}

\put(80,100){\line(1,2){10}}
\put(80,100){\line(1,4){10}}
\put(80,100){\line(3,2){30}}

\put(80,125){\vector(-1,-1){30}}
\put(113,125){\vector(1,-1){30}}
\put(50,55){\vector(1,-1){30}}
\put(143,55){\vector(-1,-1){30}}

\end{picture}
\end{center}
The fan $\mathcal{C}_{\widetilde{X}}$ is the coarsest common refinement of the quasifan $\alpha^{-1}(\mathcal{C}_{Y})$ and the cone~$\sigma$. Denote $\mu_1=(1,1,0,0)$, $\mu_2=(0,0,1,1)$. Calculations show that $\mathcal{C}_{\widetilde{X}}$ consists of four maximal cones
$$\langle e_1, e_3, \mu_1,\mu_2\rangle_{\Q_{\geq 0}},\,\langle e_1,e_4,\mu_1,\mu_2\rangle_{\Q_{\geq 0}},\,\langle e_2, e_3, \mu_1,\mu_2\rangle_{\Q_{\geq 0}}\,\,\text{and }\langle e_2,e_4,\mu_1,\mu_2\rangle_{\Q_{\geq 0}}.$$ On the picture below you can see polyhedral slice of this fan by the hyperplane $x_1+x_2+x_3+x_4=1$.

\begin{center}
\begin{picture}(210,155)
\put(71,153){$e_2$}
\put(20,50){$e_1$}
\put(43,100){$\mu_1$}
\put(181,101){$e_3$}
\put(132,-1){$e_4$}
\put(157,48){$\mu_2$}
\put(30,50){\line(1,2){50}}
\put(80,150){\line(2,-1){100}}
\put(80,150){\line(1,-3){50}}
\put(55,100){\line(3,-4){75}}
\put(80,150){\line(3,-4){75}}
\put(180,100){\line(-1,-2){50}}
\put(130,0){\line(-2,1){100}}
\multiput(55,100)(18,-9){6}%
{\line(2,-1){10}}
\multiput(55,100)(23,0){6}%
{\line(1,0){10}}
\multiput(30,50)(23,0){6}%
{\line(1,0){10}}
\multiput(30,50)(27,9){6}%
{\line(3,1){14}}

\end{picture}
\end{center}

\end{example}

\subsection{The fan of the main component of the toric Hilbert scheme}
 Finally, we complete a description of all the fans of the toric varieties from the commutative diagram (\ref{cd}) given in Section~\ref{intro} by recalling some results from \cite{C}.

Consider the main component $H_0$ of the toric Hilbert scheme and the main component $U_0$ of the universal family in the toric situation with their structure of reduced schemes. As shown in \cite[Proposition~3.6]{C}, these varieties are toric under the tori $\T/T$ and $T$ respectively. Our purpose is to recall the construction of the fans corresponding to toric varieties $H_0$ and $U_0$.

For all $\chi\in \omega\cap \Chi(T)$ denote $P^I_{\chi}:=\conv (\beta^{-1}(\chi)\cap \mathfrak{X}(\T))$.
\begin{remark}\label{int}
The equality $P^I_{\chi}=P_{\chi}$ holds if and only if the polyhedra $P_{\chi}$ have integer vertices. Such characters $\chi$ are called \emph{integer}.
\end{remark}
We call two polyhedra \emph{equivalent} if they have the same normal fan.
\begin{proposition}\cite[Proposition~4.9(2)]{C} There are only finitely many non-equivalent polyhedra $P^I_{\chi}$ for $\chi\in \omega$. The fan $\C_{H_0}$ corresponding to the variety  $H_0$ is the normal fan of the Minkowski sum over a system of representatives of the equivalent classes of these polyhedra.
\end{proposition}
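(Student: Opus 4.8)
The plan is to compute the fan $\C_{H_0}\subset\Lambda(\T/T)$ directly, using that $H_0$ is a toric variety under $\T/T$ (\cite[Proposition~3.6]{C}) together with the orbit--cone correspondence, in the same spirit as the proof of Theorem~\ref{veer}. Since $H_0$ is toric, its maximal cones correspond bijectively to the $\T/T$-fixed points of $H_0$, and more generally its cones correspond to the $\T/T$-orbits. Because $H_0=\overline{X^s/T}$ and $\T x\subset X^s$ for $x$ in the open $\T$-orbit, the open orbit of $H_0$ is the image of $\T x/T$, so every $\T/T$-fixed point of $H_0$ is a flat limit $\lim_{s\to 0}\lambda(s)\cdot[\overline{Tx}]$ of the distinguished point $[\overline{Tx}]\in H_0$ along a one-parameter subgroup $\lambda\in\Lambda(\T)$, taken modulo $\Lambda(T)$. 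So the whole problem reduces to computing these limits in $H$ and deciding when two of them coincide.

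First I would describe the degenerations of the generic orbit closure. For $x$ in the open orbit, $\overline{Tx}\cong X_\Sigma$, and its ideal $I_x\subset k[X]=k[\sigma^{\vee}\cap M]$ (with $M=\Chi(\T)$) is generated by binomials $x^{m}-c_{m,m'}x^{m'}$ with $m,m'\in\sigma^{\vee}\cap M$, $\beta(m)=\beta(m')$, $c_{m,m'}\in\G$. The flat limit $I_\lambda:=\lim_{s\to 0}\lambda(s)\cdot I_x$ in $H$ is the $\lambda$-initial ideal of $I_x$: in each weight $\chi\in\Sigma$ the action scales $x^{m}$ by $s^{\langle m,\lambda\rangle}$, so $k[X]_\chi/(I_\lambda)_\chi$ is spanned by the images of those $x^{m}$ with $m$ lying on the face $F_\chi(\lambda)$ of $P^I_\chi$ on which $\langle\cdot,\lambda\rangle$ is minimal, all remaining $x^{m}$ becoming zero. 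The weight spaces keep dimension $h_\Sigma(\chi)$, so $I_\lambda$ is a $k$-rational point of $H$, and as a limit of points of $X^s/T$ it lies in $H_0$. Reading off the surviving monomials gives $I_{\lambda_1}=I_{\lambda_2}$ if and only if $F_\chi(\lambda_1)=F_\chi(\lambda_2)$ for every $\chi\in\omega\cap\Chi(T)$; equivalently, $\lambda_1$ and $\lambda_2$ lie in the same cone of the coarsest common refinement of the normal fans $\mathcal{N}(P^I_\chi)$. In particular, for generic $\lambda$ each $F_\chi(\lambda)$ is a vertex and the fixed points of $H_0$ correspond to the compatible tuples of vertices; this identifies $\C_{H_0}$ with the coarsest common refinement of the fans $\mathcal{N}(P^I_\chi)$, $\chi\in\omega\cap\Chi(T)$.

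It remains to reduce to finitely many summands. Since $H_0$ is a variety of finite type, $\C_{H_0}$ has finitely many cones; as $\C_{H_0}$ refines every $\mathcal{N}(P^I_\chi)$, each $\mathcal{N}(P^I_\chi)$ is a coarsening of $\C_{H_0}$, and a finite fan has only finitely many coarsenings. Hence only finitely many of the fans $\mathcal{N}(P^I_\chi)$ are distinct, that is, there are only finitely many non-equivalent polyhedra $P^I_\chi$. Dropping all but one representative of each equivalence class does not change the coarsest common refinement, and the coarsest common refinement of finitely many normal fans is the normal fan of the corresponding Minkowski sum; therefore $\C_{H_0}$ is the normal fan of the Minkowski sum of a system of representatives.

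The step I expect to be the main obstacle is the flat-limit computation: showing precisely that $\lim_{s\to 0}\lambda(s)\cdot I_x$ is the ``lattice-initial'' ideal described above and that the corresponding quotient is flat with Hilbert function $h_\Sigma$. This is exactly where one must use the integral structure of the orbit closures $\overline{Tx}\cong X_\Sigma$, so that the degeneration remembers the lattice points of $\beta^{-1}(\chi)\cap\sigma^{\vee}$ and thus produces the integral polytopes $P^I_\chi$ rather than the polyhedra $P_\chi$ --- precisely the feature that makes $\C_{H_0}$ a refinement of $\C_{(X/_CT)_0}$, in accordance with Theorem~\ref{remw}. A minor additional point is the bookkeeping of the orbit--cone dictionary (matching maximal cones with fixed points, and the order-reversing nature of the correspondence), which is routine given \cite[Proposition~3.6]{C}.
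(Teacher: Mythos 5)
The paper offers no proof of this statement --- it is quoted from \cite[Proposition~4.9(2)]{C} --- but your argument is correct and follows essentially the same route as the original: realize the $\T/T$-degenerations of the distinguished point $[\overline{Tx}]\in H_0$ as initial ideals, observe that in each weight $\chi$ the surviving monomials are exactly the lattice points on the $\lambda$-minimal face of $P^I_\chi$ (this is where the integral polyhedra, rather than the $P_\chi$, enter), and match the resulting partition of $\Lambda(\T/T)_{\Q}$ into ``same-limit'' classes with the relative interiors of the cones of $\C_{H_0}$ via the orbit--cone correspondence. The one point you gloss over is that your coarsening/finiteness step requires all the normal fans $\mathcal{N}(P^I_\chi)$ to have the same support as $\C_{H_0}$; this does hold, because every $P^I_\chi$ with $\chi\in\Sigma$ has the same recession cone $\ker\beta_{\Q}\cap\sigma^{\vee}$ (its rational generators may be added to any lattice point of $P_\chi$), so all these normal fans are supported on the dual of that cone.
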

 Of course, it is again equivalent to say that $\C_{H_0}$ is the coarsest common refinement of the normal fans of all $P_{\chi}$.

The variety $U_0$ coincides with $\overline{\T (y, x)}\subset H_0\times X$, where $x\in X$ lies in the open $\T$-orbit and $y:=\overline{T x}\in H_0$.
From Proposition \ref{per} it follows that the fan $\C_{U_0}$ is the coarsest common refinement of the fan $\C_{H_0}$ and $\sigma$.

Now we know the fans of all varieties placed in the vertices of the square from the commutative diagram (\ref{cd}). By Remark \ref{int} it follows that the fan $\C_{H_0}$ refines the fan $\C_{(X/_CT)_0}$ and the fan $\C_{U_0}$ refines the fan $\C_{W_X}$, what illustrates that morphisms $\Psi^0$ and $\Phi$ are toric.
\begin{remark}
From construction of all these fans or straightforwardly from the properties of morphisms $\Psi^0$, $\Phi$ and $p^W_{\X}$ it follows that $|\C_{H_0}|=|\C_{(X/_CT)_0}|$ and $|\C_{U_0}|=|\C_{W_X}|=\sigma$.
\end{remark}

In Example \ref{ahex} all characters $\chi\in\omega\cap\Chi(T)$ are integer, since the fans of the toric varieties $\C_{H_0}$ and $\C_{(X/_CT)_0}$ coincide (and so, certainly, the fans $\C_{U_0}$ and $\C_{W_X}$ also coincide). But this is not true in general. The example below was considered in \cite{C}. There were constructed the fans $\C_{H_0}$ and $\C_{(X/_CT)_0}$, and it was shown that they do not coincide in the considered situation. We shall complete this example constructing the fans $\C_{U_0}$ and $\C_{W_X}=\C_{\widetilde{X}}$.

\begin{example}\cite[Example~5.3]{C}
Let $X_{\sigma}=\A^3$, $\T=(\G)^3$ act by rescaling of coordinates, and $T=\G$ act by $t\cdot (x_1,x_2,x_3)=(tx_1,tx_2,t^2 x_3)$. Denote by $e_1=(1,0,0)$, $e_2=(0,1,0)$, $e_3=(0,0,1)$ the standard basis of $\Lambda(\T)_{\Q}$, and let $\nu_1,\,\nu_2,\,\nu_3$ be its dual basis in $\Chi(\T)_{\Q}$.

\medskip

\begin{center}
\begin{picture}(150, 55)
{\small \put(20, 50){$\Chi(\T)$}}

\put(25, 20){\circle*{2}}

\put(25, 20){\vector(-1, -1){15}} \put(0, 10){$\nu_1$}

\put(25, 20){\vector(1, 0){20}} \put(40, 10){$\nu_2$}

\put(10, 35){$\nu_3$} \put(25, 20){\vector(0, 1){20}}

\put(70, 30){\vector(1, 0){15}} \put(73, 35){$\beta$}

{\small \put(120, 50){$\Chi(T)$}

\put(110, 15){$\beta(\nu_1)$} \put(110, -2){$\beta(\nu_2)$} \put(113, 7){$\line(0,1){6}$} \put(115,
7){$\line(0,1){6}$}

\put(145, 15){$\beta(\nu_3)$}}

\put(105, 25){\vector(1, 0){25}}

\put(105, 25){\vector(1, 0){50}}

\put(105, 25){\circle*{2}}

\end{picture}
\end{center}

It is easy to see that the fan $\C_{H_0}$ is the normal fan of the trapezoid $P_3$, whereas the fan $\C_{(X/_CT)_0}$ is the normal fan of the triangle $P_2$.
As in Example~\ref{ahex},
we will identify the space $\Lambda(\T/T)_{\Q}$ with the orthogonal complement of $\Lambda(T)_{\Q}$ in the space $\Lambda(\T)_{\Q}$.

Denote $\rho_1=(1,1,-1)$, $\rho_2=(-5,1,2)$, $\rho_3=(1,-5,2)$ in the coordinates of the standard basis of $\Lambda (\T)_{\Q}$. All these vectors are lying in $\Lambda(\T/T)_{\Q}$, and it is easy to verify that $\C_{(X/_CT)_0}$ consists of three maximal cones $$\langle\rho_1, \rho_2\rangle_{\Q_{\geq 0}},\,\langle\rho_2, \rho_3\rangle_{\Q_{\geq 0}},\,\text{and}\ \langle\rho_1, \rho_3\rangle_{\Q_{\geq 0}}$$
and $\C_{H_0}$ consists of four maximal cones
$$\langle\rho_1, \rho_2\rangle_{\Q_{\geq 0}},\,\langle\rho_1, \rho_3\rangle_{\Q_{\geq 0}},\,\langle -\rho_1, \rho_2\rangle_{\Q_{\geq 0}},\,\text{and}\ \langle -\rho_1, \rho_3\rangle_{\Q_{\geq 0}}.$$

\begin{center}
\begin{picture}(100,80)
\put(50,40){\vector(0,-1){35}}
\put(50,40){\vector(-2,1){40}}
\put(50,40){\vector(2,1){40}}
\multiput(50,40)(0,5){6}%
{\line(0,1){3}}
\put(50,70){\vector(0,1){5}}
\thicklines
\put(10,20){\line(1,0){80}}
\put(30,60){\line(1,0){40}}
\put(50,56){\line(-1,-2){14}}
\put(50,56){\line(1,-2){14}}
\put(36,28){\line(1,0){28}}
\put(10,20){\line(1,2){20}}
\put(90,20){\line(-1,2){20}}
\put(92,15){$P_3$}
\put(65,26){$P_2$}
{\small \put(40,2){$\rho_1$}
\put(30,75){$-\rho_1$}
\put(2,63){$\rho_2$}
\put(91,63){$\rho_3$}
}
\end{picture}
\end{center}

We see that the fan $\C_{H_0}$ subdivides the fan $\C_{(X/_CT)_0}$, and, in particular, they do not coincide. If we intersect the corresponding quasifans in $\Lambda(\T)_{\Q}$ with the cone $\sigma=\langle e_1,e_2,e_3 \rangle_{\Q_{\geq 0}}$, we get the fans $\C_{U_0}$ and $\C_{W_X}$ respectively. Denote $\kappa=(1,1,2)$, $\mu_1=(1,1,0)$, $\mu_2=(1,0,2)$ and $\mu_3=(0,1,2)$. Then $\C_{W_X}$ consists of three maximal cones
$$\langle\kappa, \mu_1, e_1, \mu_2\rangle_{\Q_{\geq 0}},\,\langle\kappa, \mu_1, e_2, \mu_3 \rangle_{\Q_{\geq 0}},\,\text{and}\ \langle\kappa, \mu_2, e_3, \mu_3 \rangle_{\Q_{\geq 0}}$$
and $\C_{U_0}$ consists of four maximal cones
$$\langle\kappa, \mu_1, e_1, \mu_2\rangle_{\Q_{\geq 0}},\,\langle\kappa, \mu_1, e_2, \mu_3 \rangle_{\Q_{\geq 0}},\,\langle\kappa, e_3, \mu_2 \rangle_{\Q_{\geq 0}},\,\text{and}\ \langle\kappa, e_3, \mu_3 \rangle_{\Q_{\geq 0}}.$$

\begin{center}
\begin{picture}(200,200)

\put(10,100){\line(0,1){90}}
\put(10,100){\line(2,1){180}}
\put(10,100){\line(1,0){180}}
\put(10,100){\line(2,-1){180}}
\put(70,160){\line(-1,-1){60}}
\qbezier(10,100)(10,100)(46,190)
\qbezier(10,100)(10,100)(46,154)
{\small \put(0,192){$e_3$}
\put(192,192){$e_2$}
\put(192,10){$e_1$}
\put(192,97){$\mu_1$}
\put(72,162){$\kappa$}
\put(43,194){$\mu_3$}
\put(32,152){$\mu_2$}
\put(0,92){$O$}
}
\thicklines
\put(190,10){\line(0,1){180}}
\put(10,190){\line(1,0){180}}
\put(10,190){\line(1,-1){180}}
\put(70,160){\line(2,-1){120}}
\qbezier(70,160)(58,175)(46,190)
\qbezier(70,160)(70,160)(46,154)
\multiput(70,160)(-16,8){4}
{\line(-2,1){10}}
\put(10,190){\line(2,-1){10}}
\end{picture}
\end{center}

\end{example}

\section*{Acknowledgements}
The authors are grateful to Ivan V. Arzhantsev for his attention to this work and fruitful discussions. We also thank Dmitri A. Timashev for many important comments. The first author is
 thankful to Michel Brion for stimulating ideas and useful
 discussions. The second author thanks Roman Gayduk and Mikhail Gorsky for helpful comments. The authors are grateful to the referee for important comments which
leaded to improvement of the article.

\normalsize

\bigskip

\end{document}